\date{}
\newtheorem{theorem}{Theorem}[section]
\newtheorem{lemma}[theorem]{Lemma}
\newtheorem{corollary}[theorem]{Corollary}
\newtheorem{remark}[theorem]{Remark}
\newtheorem{proposition}[theorem]{Proposition}
\newtheorem{example}[theorem]{Example}
\numberwithin{equation}{section}
\begin{document}

\centerline{\sc Zvonkin's transform and the regularity of solutions to double}

\centerline{\sc divergence form elliptic equations}

\vspace*{0.2cm}

\centerline{\sc Vladimir I. Bogachev, Michael R\"ockner, Stanislav V. Shaposhnikov}

\vspace*{0.3cm}

{\small
{\bf Abstract.}
We study qualitative properties of  solutions to double divergence form elliptic equations
(or stationary Kolmogorov equations)  on~$\mathbb{R}^d$.
It is shown that the Harnack inequality holds for nonnegative solutions
 if the diffusion matrix $A$ is nondegenerate and satisfies the Dini mean oscillation condition and the drift coefficient $b$
is locally integrable to a power $p>d$.
We establish new estimates for the $L^p$-norms of solutions and
obtain a generalization of the known theorem of Hasminskii on the existence
of a probability solution to the stationary Kolmogorov equation to the case where the matrix $A$ satisfies Dini's
condition   or belongs to the class VMO. These results are based on a new analytic version of Zvonkin's transform
of the drift coefficient.
}

Keywords: double divergence form elliptic equation, Kolmogorov equation, Dini  condition, class VMO, Zvonkin's transform

AMS Subject Classification: 35B65, 35B09, 35J15

\section{Introduction}

We study qualitative properties of  solutions to the double divergence form elliptic equation
(or the stationary Kolmogorov equation)
\begin{equation}\label{eq1}
\partial_{x_i}\partial_{x_j}\bigl(a^{ij}\varrho\bigr)-\partial_{x_i}\bigl(b^i\varrho\bigr)=0
\end{equation}
on an open set $\Omega\subset\mathbb{R}^d$.
The matrix $A=(a^{ij})$ is supposed to be symmetric and positive definite,
$a^{ij}$ and $b^i$ are  Borel functions.
Set
$$
L\varphi=a^{ij}\partial_{x_i}\partial_{x_j}\varphi+b^i\partial_{x_i}\varphi,
\quad L^{*}\varphi=\partial_{x_i}\partial_{x_j}\bigl(a^{ij}\varphi\bigr)-\partial_{x_i}\bigl(b^i\varphi\bigr).
$$
Then equation (\ref{eq1}) can be written in a shorter form
$$
L^{*}\varrho=0.
$$
A function $\varrho\in L^1_{loc}(\Omega)$ is a solution to equation (\ref{eq1}) if
$$
a^{ij}\varrho, b^i\varrho\in L^1_{loc}(\Omega)
$$
and for every function $\varphi\in C_0^{\infty}(\Omega)$
the equality
$$
\int_{\Omega}L\varphi(x)\varrho(x)\,dx=0
$$
is fulfilled. A nonnegative solution $\varrho$ to the  Kolmogorov equation~{\rm (\ref{eq1})} satisfying the condition
$$
\int_{\Omega}\varrho(x)\,dx=1
$$
is called a probability solution.

An important example of a double divergence form elliptic equation is delivered by the stationary
 Kolmogorov  equation for invariant measures of a diffusion process.
Various properties of solutions to such equations were studied by many authors.
The principal questions are

1) the existence of solutions,
especially, of probability solutions,

2) the existence of solution densities and their properties such as local
boundedness, continuity and Sobolev differentiability,

3) local separation of
densities from zero, that is, certain forms of the Harnack inequality.

In case of locally Lipschitz coefficients the existence of a probability solution
is given by the  classical theorem of Hasminskii \cite{Has}
under the existence of a Lyapunov function. This theorem was generalized in \cite{BR}
(see also further generalizations in \cite[Chapter 2]{book}, \cite{BRSH12}, \cite{Trut1}), where either the
diffusion coefficient is  nondegenerate and  locally Sobolev with the order of integrability higher than dimension along with
the same local integrability of the drift coefficient or both the diffusion and drift coefficients are continuous.
It was shown in  \cite{BKR} that the solution density is locally Sobolev in the
first case and its continuous version is locally separated from  zero.
It was proved in \cite{Sjogan73} and \cite{Sjogan75}  that in the case where the matrix
$A=(a^{ij})$ is nondegenerate and satisfies  Dini's condition  and the coefficients $b^i$ are bounded,
the solution has a continuous version, and when the coefficients  $a^{ij}$ are H\"older continuous, then the solution has a H\"older continuous version.
These results have been generalized in \cite{BSH17} to the case of integrable $b^i$.
Analogous results have been obtained in \cite{DongKim} and \cite{DongEscKim}  under the assumption
that the matrix $A$ satisfies the Dini mean oscillation condition, which is weaker than the classical  Dini  condition.
Note also the paper \cite{LePimen}, where some additional regularity of solutions along level sets has been established.
In the papers  \cite{Bauman1}, \cite{Bauman2} some interesting counter-examples were constructed and the so-called renormalized
solutions were studied, in particular, an example was constructed of a positive definite and continuous diffusion matrix $A$
for which the equation $\partial_{x_i}\partial_{x_j}(a^{ij}\varrho)=0$
has a locally unbounded solution.
The Harnack inequality for double divergence form equations with the matrix $A$
belonging to the Sobolev class with a sufficiently high integrability exponent
is a corollary of the Harnack inequality for divergence form
elliptic equations (see \cite[Chapter 3]{book}). However, in case of merely H\"older continuous
matrix $A$ the double divergence form equation cannot be reduced to a divergence form equation,
moreover, the classical results about the regularity of solutions
to divergent form elliptic equations are not true for solutions to double divergence form equations.
In the case where the matrix $A$ satisfies  Dini's  condition, the Harnack inequality was
obtained in \cite{Mamedov} for $b=0$, and for any bounded drift $b$ it was established in \cite{BSH17}.
The proof consisted in obtaining certain inequalities for solutions generalizing classical mean value
theorems and heavily used the boundedness of $b$.
Another way of proving the Harnack inequality for $b=0$
was suggested in \cite{DongEscKim},
where the reasoning employs  estimates for the modulus of continuity of the solution and some properties of renormalized solutions
from the paper \cite{Esc} considering double divergence form equations
without first order terms.
In \cite{BKR} and \cite{BSH17} (see also \cite[Chapter 1]{book}) the
integrability of solutions was investigated in some cases when the  diffusion matrix does not satisfy Dini's condition.
In particular, it was shown that if $A$ belongs to the  class $VMO$ and the coefficient $b$
is locally integrable to some power $p>d$, then the solution belongs to all $L^p_{loc}(\Omega)$.
In spite of a considerable number of papers devoted to double divergence form
elliptic equations, the answers to the following questions, certain specifications of general
problems 1 --- 3 mentioned above,  have remained open so far:

$\bullet$ What are optimal conditions for the Harnack inequality for nonnegative solutions to double divergence form equations?
In particular, does the Harnack inequality hold for $A$ satisfying Dini's condition and
an unbounded locally  Lebesgue integrable drift $b$?

$\bullet$ What are optimal conditions for a high  local integrability of solutions?
In particular, the dependence of
the  integrability of the solution on the modulus of continuity of the matrix $A$ has not been studied.

$\bullet$ What are optimal conditions for the existence and uniqueness of a probability solution to the stationary Kolmogorov
equation?

Here we obtain new results related to these questions. (i) We prove that the Harnack inequality holds for nonnegative solutions on $\mathbb{R}^d$
 if the matrix $A$ is nondegenerate and satisfies the Dini mean oscillation condition and the  coefficient $b$
is locally integrable to a power $p>d$.

(ii)~We establish new estimates for the $L^p$-norms of solutions and
obtain sufficient conditions for the local
exponential integrability. Note that it was asserted in \cite{BSH17}
that in the case of a locally bounded coefficient $b$ and a nondegenerate matrix $A$
of class $VMO$ the solution is locally  exponentially integrable. However, the justification given there contains a gap, namely,
a wrong dependence on $p$ of the
constant in a priori $L^p$-estimates of second derivatives of solutions to the equation ${\rm tr}(AD^2u)=f$.
In the general case, the dependence of the constant on $p$ is influenced by the modulus of
continuity of $A$. In the present paper we derive an  estimate that takes the modulus of continuity of $A$ into account.

(iii) Finally, an important new result of our paper is a generalization of the known theorem of Hasminskii on the existence
of a probability solution to the stationary Kolmogorov equation to the case where the matrix $A$ satisfies Dini's
condition   or belongs to the class VMO.
Results on existence of positive or probability
solutions to the stationary  Kolmogorov equation in case of irregular coefficients
are useful for constructing  diffusion processes (see \cite{Trut1}). We also discuss
uniqueness of probability solutions and their probabilistic interpretation.

These results are obtained with the aid of a new approach to the study of regularity of
solutions to double divergence form  equations based on Zvonkin's transform, well known in the theory of diffusion processes,
which applies for smoothing the drift coefficient
(more precisely, we deal with its elliptic version, in the original paper \cite{Zvonkin} this transform was used for parabolic equations).
In recent years Zvonkin's transform has been applied for the study of
diffusion processes with generalized coefficients (see \cite{FlandRus}, \cite{RZ21}, \cite{ZZ18}).
In this paper we apply Zvonkin's transform not to random processes,
but to solutions of the Kolmogorov equation, moreover,
we do not assume any connection of solutions with diffusion processes. It is shown below that with the aid of a suitable change
of coordinates an integrable drift can be transformed into a continuously differentiable drift such  that the new  diffusion matrix
enables us to apply known results about  regularity of solutions. This leads to substantial generalizations of some results and
simplification of proofs of other results on regularity of
solutions to  double divergence form  elliptic equations. Note that change of coordinates has proved
to be also useful in the study of uniqueness of solutions (see \cite{Tih}).

In \S2 we construct Zvonkin's transform in the analytic setting,
in \S3 we study the regularity of solutions,
and in \S4 we apply our results for  proving existence and uniqueness of probability solutions.

\section{Zvonkin's transform}

We first illustrate our approach by example of a smooth change of coordinates.

Let $\Phi\colon\mathbb{R}^d\to\mathbb{R}^d$ be a diffeomorphism of  class $C^2$ and $\Psi=\Phi^{-1}$.
Set
$$
q^{km}(y)=a^{ij}(\Psi(y))\partial_{x_i}\Phi^k(\Psi(y))\partial_{x_j}\Phi^m(\Psi(y)),
$$
$$
h^k(y)=a^{ij}(\Psi(y))\partial_{x_i}\partial_{x_j}\Phi^k(\Psi(y))+b^i(\Psi(y))\partial_{x_i}\Phi^k(\Psi(y)),
$$
$$
\sigma(y)=\varrho(\Psi(y))|\det \nabla\Psi(y)|.
$$
Then on the domain $\Omega'=\Phi(\Omega)$ the function $\sigma$ satisfies the equation $\mathcal{L}^{*}\sigma=0$
with the operator
$$
\mathcal{L}\varphi(y)=q^{km}(y)\partial_{y_k}\partial_{y_m}\varphi(y)+h^{k}(y)\partial_{y_k}\varphi(y).
$$
Observe that the double divergence structure of the equation does not change.
One can construct a  mapping $\Phi$ of the form $\Phi(x)=x+u(x)$, where $u=(u^1,\ldots,u^d)$,
such that $u^k$ is a solution to the elliptic equation
$$
Lu^k-\lambda u^k=-b^k.
$$
For $\lambda$ sufficiently large, this equation possesses a solution for which $\Phi$ is a
diffeomorphism. Using $\Phi$  to change variables we obtain a new drift coefficient
 $h(y)=\lambda u(\Psi(y))$. It turns out that under fairly general assumptions about the coefficients
(see below) the vector field $h$ is continuously differentiable and the
regularity of the matrix $(q^{km})$ is not worse than that of the original matrix $(a^{ij})$.
This circumstance enables us after the change of coordinates to apply the known results on the regularity of solutions
and obtain the desired properties for the function~$\sigma$,
hence also for original solution~$\varrho$.
The main difficulty consists in constructing the mapping~$u$.
Here we employ some recent  results of N.V.~Krylov
on the solvability of elliptic equations in Sobolev spaces
(see \cite{Kr2007} and \cite{Krbook}).

Under broad assumptions, we construct a diffeomorphism, which will be called Zvonkin's transform.

Throughout this section we assume that the  following conditions are fulfilled.

\, $\rm\bf H_a$ \, The coefficients $a^{ij}$ are defined on all of $\mathbb{R}^d$ and for some constant $m>0$ and all $x\in\mathbb{R}^d$
the following inequalities hold:
$$
m\cdot{\rm I}\le A(x)\le m^{-1}\cdot{\rm I}.
$$

\, $\rm\bf VMO$ \, The coefficients $a^{ij}$ belong to the class $VMO$, that is,
there exists a continuous increasing function $\omega$ on $[0, +\infty)$ such that $\omega(0)=0$ and
$$
\sup_{z\in\mathbb{R}^d}r^{-2d}\int_{B(z, r)}\int_{B(z, r)}|a^{ij}(x)-a^{ij}(y)|\,dxdy\le\omega(r), \quad r>0.
$$

\, $\rm\bf H_b$ \, $b\in L^{d+}_{loc}$, which means that for every ball $B$ there is a  number $p=p(B)>d$ such
that the restriction of $|b|$ to $B$ belongs to $L^{p}(B)$.

\vspace*{0.2cm}

The assumption  that the  diffusion matrix satisfies the aforementioned conditions on the whole space $\mathbb{R}^d$ does not restrict
the generality of our considerations, although the equation will be considered on a domain.
Moreover, in many problems it is useful to have global changes of variables,
but not local. For our purposes of proving  local Harnack inequalities or the continuity of solutions it suffices
to extend the coefficients on the whole space with preservation of the required conditions. The drift coefficient can be extended by
zero outside a fixed ball~$B$ and the  diffusion coefficient can be extended by the formula $\psi A+(1-\psi)I$
with a smooth function $\psi$ that equals $1$ on~$B$ and $0$ outside a larger ball.
Of course, it is important here that the equation holds on a ball, but not on the whole space.

Let $B(x_0, 4R)\subset\Omega$ and $\beta(x)=b(x)$ if $x\in B(x_0, 4R)$ and $\beta(x)=0$ if $x\notin B(x_0, 4R)$.
Then $\beta\in L^p(\mathbb{R}^d)$ and
$$
\|\beta\|_{L^p(\mathbb{R}^d)}=\|b\|_{L^p(B(x_0, 4R))}.
$$
Let $1\le k\le d$. Let us consider on $\mathbb{R}^d$ the elliptic equation
\begin{equation}\label{eq2}
{\rm tr}(AD^2u)+\langle\beta, \nabla u\rangle-\lambda u=-\beta^k, \quad \lambda>0.
\end{equation}

\begin{proposition}\label{prop1}
For every $\delta>0$ there exists $\lambda>0$ such that for every $1\le k\le d$
equation {\rm (\ref{eq2})} has a solution $u\in C^1(\mathbb{R}^d)\cap W^{p, 2}(\mathbb{R}^d)$ for which
$$
\sup_{x\in\mathbb{R}^d}|\nabla u(x)|\le \delta, \quad \|u\|_{W^{2, p}(\mathbb{R}^d)}\le M,
$$
where the constant $M$ depends only on $d$, $\nu$, $\omega$ and $\|b\|_{L^p(B(x_0, 4R))}$.
\end{proposition}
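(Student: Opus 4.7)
The plan is to treat the first-order term $\langle\beta,\nabla u\rangle$ as a perturbation and solve equation~(\ref{eq2}) by a Banach fixed-point argument, with contraction achieved by taking $\lambda$ sufficiently large. The key input is the $W^{2,p}$-theory for the model equation ${\rm tr}(AD^2 w)-\lambda w=f$ under $\mathbf{H_a}$ and $\mathbf{VMO}$ (cf.~\cite{Kr2007}, \cite{Krbook}): for every $f\in L^p(\mathbb{R}^d)$, $p>1$, and every $\lambda>0$ there is a unique $w\in W^{2,p}(\mathbb{R}^d)$ solving it, and it satisfies the scale-invariant a~priori bound
$$
\lambda\|w\|_{L^p}+\sqrt{\lambda}\,\|\nabla w\|_{L^p}+\|D^2 w\|_{L^p}\le N\|f\|_{L^p},
$$
with $N=N(d,p,m,\omega)$ independent of~$\lambda$. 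I would apply this at the fixed exponent $p>d$ for which $\beta\in L^p(\mathbb{R}^d)$.

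Define the affine map $T$ sending $v\in W^{2,p}(\mathbb{R}^d)$ with $\nabla v\in L^\infty$ to the unique $W^{2,p}$-solution $u$ of
$$
{\rm tr}(AD^2 u)-\lambda u=-\beta^k-\langle\beta,\nabla v\rangle;
$$
the right-hand side lies in $L^p$ since $\beta\in L^p$ and $\nabla v$ is bounded. For two inputs, the difference $w=Tv_1-Tv_2$ solves the same equation with right-hand side $-\langle\beta,\nabla(v_1-v_2)\rangle$, whose $L^p$-norm is at most $\|\beta\|_{L^p}\|\nabla(v_1-v_2)\|_\infty$. The Gagliardo--Nirenberg inequality on $\mathbb{R}^d$, valid for $p>d$,
$$
\|\nabla w\|_\infty\le C(d,p)\,\|D^2 w\|_{L^p}^{1/2+d/(2p)}\,\|w\|_{L^p}^{1/2-d/(2p)},
$$
combined with the bounds $\|D^2 w\|_{L^p}\le N\|\beta\|_{L^p}\|\nabla(v_1-v_2)\|_\infty$ and $\lambda\|w\|_{L^p}\le N\|\beta\|_{L^p}\|\nabla(v_1-v_2)\|_\infty$ coming from the a~priori estimate, yields
$$
\|\nabla(Tv_1-Tv_2)\|_\infty\le C(d,p)\,N\,\|\beta\|_{L^p}\,\lambda^{-\gamma}\,\|\nabla(v_1-v_2)\|_\infty,\qquad \gamma=\tfrac{1}{2}-\tfrac{d}{2p}>0.
$$
Choosing $\lambda$ so large that $C(d,p)\,N\,\|\beta\|_{L^p}\,\lambda^{-\gamma}\le 1/2$ makes $T$ a contraction in the norm $\|\nabla v\|_\infty$, and its unique fixed point~$u$ solves~(\ref{eq2}).

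To read off the two quantitative conclusions, apply the contraction bound to $u=T0+(Tu-T0)$. Since $T0$ obeys the same estimate with $v_1-v_2$ replaced by the zero input to constant data, one gets $\|\nabla u\|_\infty\le 2C(d,p)\,N\,\|\beta\|_{L^p}\,\lambda^{-\gamma}$, which is $\le \delta$ provided $\lambda$ is taken large enough in terms of $d$, $p$, $m$, $\omega$, $\|b\|_{L^p(B(x_0,4R))}$ and~$\delta$. Reinserting this into the a~priori $W^{2,p}$-estimate, applied to~$u$ itself with source $-\beta^k-\langle\beta,\nabla u\rangle$, gives $\|u\|_{W^{2,p}}\le N(1+\delta)\|\beta\|_{L^p}=:M$, with the stated dependence of constants. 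Regularity $u\in C^1(\mathbb{R}^d)$ is then automatic from the embedding $W^{2,p}(\mathbb{R}^d)\hookrightarrow C^{1,1-d/p}(\mathbb{R}^d)$ valid for $p>d$.

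The main obstacle is marshalling the precise scale-invariant Krylov estimate with the constant $N$ depending only on $d,p,m$ and the VMO modulus~$\omega$ (and not on $\lambda$), since it is this $\sqrt\lambda$- and $\lambda$-weighting that produces the crucial smallness factor $\lambda^{-\gamma}$. Once that is in hand, the interpolation-plus-contraction closure is routine and the ball $B(x_0,4R)$ enters only through the norm $\|b\|_{L^p(B(x_0,4R))}=\|\beta\|_{L^p(\mathbb{R}^d)}$.
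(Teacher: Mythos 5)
Your proof is correct and rests on the same essential mechanism as the paper's: the first-order term $\langle\beta,\nabla u\rangle$ is small relative to the leading operator once one isolates a negative power of $\lambda$ via an interpolation inequality for $\|\nabla u\|_\infty$ between $\|D^2u\|_{L^p}$ and $\|u\|_{L^p}$. The packaging differs in two places, and both of your choices are a bit cleaner than the paper's. First, you use the scale-invariant form $\lambda\|w\|_{L^p}+\sqrt{\lambda}\|\nabla w\|_{L^p}+\|D^2w\|_{L^p}\le N\|f\|_{L^p}$ and Gagliardo--Nirenberg directly, whereas the paper takes Krylov's estimate without the $\sqrt{\lambda}$ weight and manufactures the $\varepsilon$-interpolation by hand with a dilation $x\mapsto\varepsilon x$; these are equivalent. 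Second, you construct the solution by a contraction mapping, whereas the paper proves the a~priori bound for the full operator ${\rm tr}(AD^2v)+\langle\beta,\nabla v\rangle-\lambda v$ and then invokes the method of continuity; again, these are interchangeable ways of passing from an a~priori estimate plus a solvable model to solvability of the perturbed equation.

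Two small points you should tighten. (i) The space $W^{2,p}(\mathbb{R}^d)$ equipped with the metric $\rho(v_1,v_2)=\|\nabla(v_1-v_2)\|_\infty$ is not complete, so Banach's fixed-point theorem does not apply verbatim. This is harmless: along the Picard iterates $u_{n+1}=Tu_n$ one has, by the same a~priori estimate, $\|u_{n+1}-u_n\|_{W^{2,p}}\le N\|\beta\|_{L^p}\|\nabla(u_n-u_{n-1})\|_\infty$, which decays geometrically; hence $\{u_n\}$ is Cauchy in $W^{2,p}$, its limit lies in $W^{2,p}$, and it is the (unique) fixed point. A sentence to this effect closes the gap. (ii) In the VMO setting Krylov's estimate holds for $\lambda\ge\lambda_0$ with a threshold $\lambda_0$ depending on $d$, $p$, $m$ and the modulus $\omega$, not for ``every $\lambda>0$'' with $N$ uniform. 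Since you ultimately take $\lambda$ large anyway this changes nothing, but the statement as written overstates the known result.
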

\begin{proof}
According to \cite[Chapter 6, Section 4, Theorem 1]{Krbook}, there exist numbers $\lambda_0>0$ and $N_0$ such that for all $\lambda>\lambda_0$
and every function $v\in W^{p, 2}(\mathbb{R}^d)$ we have the inequality
$$
\lambda\|v\|_{L^p(\mathbb{R}^d)}+\|v\|_{W^{p,2}(\mathbb{R}^d)}\le N_0\|{\rm tr}(AD^2v)-\lambda v\|_{L^p(\mathbb{R}^d)}.
$$
For every function $f\in L^p(\mathbb{R}^d)$ there exists a unique solution $v\in W^{2, p}(\mathbb{R}^d)$
of the equation
$$
{\rm tr}(AD^2v)-\lambda v=f.
$$
Since $p>d$, by the   embedding theorem for every function $v\in W^{p, 2}(\mathbb{R}^d)$
one has the estimate
$$
\|\nabla v\|_{L^{\infty}(\mathbb{R}^d)}\le N_1\|v\|_{W^{p, 2}(\mathbb{R}^d)}.
$$
By \cite[Chapter 1, Section 5, Corollary 2]{Krbook} there  exists a constant $N_2$ such that
$$
\|\nabla v\|_{L^p(\mathbb{R}^d)}\le N_2\|D^2v\|_{L^p(\mathbb{R}^d)}+N_2\|v\|_{L^p(\mathbb{R}^d)}.
$$
Therefore,
$$
\|\nabla v\|_{L^{\infty}(\mathbb{R}^d)}\le N_3\|D^2v\|_{L^p(\mathbb{R}^d)}+N_3\|v\|_{L^p(\mathbb{R}^d)}.
$$
Let $\varepsilon>0$. By a standard reasoning, replacing $x$ with $\varepsilon x$, we obtain the inequality
$$
\|\nabla v\|_{L^{\infty}(\mathbb{R}^d)}\le \varepsilon^{1-d/p}N_3\|D^2v\|_{L^p(\mathbb{R}^d)}
+\varepsilon^{-1-d/p}N_3\|v\|_{L^p(\mathbb{R}^d)}.
$$
Thus, we can assume that for every $\varepsilon>0$ there exists a constant $N_4=N_4(\varepsilon)$ for which
$$
\|\nabla v\|_{L^{\infty}(\mathbb{R}^d)}\le \varepsilon\|v\|_{W^{2, p}(\mathbb{R}^d)}+N_4\|v\|_{L^p(\mathbb{R}^d)}.
$$
Let us now estimate the expression
$$
\|\langle\beta, \nabla v\rangle\|_{L^p(\mathbb{R}^d)}.
$$
We have
$$
\|\langle\beta, \nabla v\rangle\|_{L^p(\mathbb{R}^d)}\le
N_4
\|\beta\|_{L^p(\mathbb{R}^d)}\|v\|_{L^p(\mathbb{R}^d)}+\varepsilon\|\beta\|_{L^p(\mathbb{R}^d)}\|v\|_{W^{2, p}(\mathbb{R}^d)}.
$$
Take $\varepsilon_0>0$ and $\lambda_1\ge \lambda_0$ such that
$$
\varepsilon_0 N_0\|\beta\|_{L^p(\mathbb{R}^d)}<1/2, \quad N_4N_0\|\beta\|_{L^p(\mathbb{R}^d)}<\lambda_1/2.
$$
Then for every $\lambda>\lambda_1$ and $v\in W^{p, 2}(\mathbb{R}^d)$ we have
$$
\lambda\|v\|_{L^p(\mathbb{R}^d)}+\|v\|_{W^{p, 2}(\mathbb{R}^d)}\le
2N_0\|{\rm tr}(AD^2v)+\langle\beta,\nabla v\rangle -\lambda v\|_{L^p(\mathbb{R}^d)}.
$$
This estimate remains unchanged if we replace $\beta$ by $t\beta$, where $t\in[0, 1]$.
Set
$$
L_tv=t\Bigl({\rm tr}(AD^2v)+\langle\beta,\nabla v\rangle-\lambda v\Bigr)+(1-t)\Bigl({\rm tr}(AD^2v)-\lambda v\Bigr),
\quad t\in[0, 1].
$$
The continuous  operators $L_t$ from $W^{2, p}(\mathbb{R}^d)$ to $L^p(\mathbb{R}^d)$ satisfy the condition
$$
\|L_tv\|_{L^p(\mathbb{R}^d)}\ge (2N_0)^{-1}\|v\|_{W^{2, p}(\mathbb{R}^d)}.
$$
Hence the standard method of continuation with respect to a parameter ensures the existence of a solution $u$ to equation (\ref{eq2}).
By the embedding theorem $u\in C^1(\mathbb{R}^d)$. In addition,
$$
\|u\|_{L^p(\mathbb{R}^d)}\le 2\lambda^{-1}N_0\|\beta\|_{L^p(\mathbb{R}^d)},
\quad \|u\|_{W^{2, p}(\mathbb{R}^d)}\le 2N_0\|\beta\|_{L^p(\mathbb{R}^d)}.
$$
It has been shown above that for every $\varepsilon>0$ we have
$$
\|\nabla u\|_{L^{\infty}(\mathbb{R}^d)}\le \varepsilon\|u\|_{W^{2, p}(\mathbb{R}^d)}+N_4\|u\|_{L^p(\mathbb{R}^d)},
$$
where the right-hand side is estimated from above by
$$
2\varepsilon N_0\|\beta\|_{L^p(\mathbb{R}^d)}+\lambda^{-1}2N_0N_4\|\beta\|_{L^p(\mathbb{R}^d)}.
$$
Therefore, taking $\varepsilon$ sufficiently small and $\lambda$ sufficiently large we can obtain the desired
estimate $\|\nabla u\|_{L^{\infty}(\mathbb{R}^d)}\le \delta$.
\end{proof}

\begin{remark}\rm
Since $u\in W^{p,2}(\mathbb{R}^d)$ with $p>d$, by the   embedding theorem $u\in C^{1+(1-d/p)}(\mathbb{R}^d)$, hence
the function $u$ is bounded and the derivatives $u_{x_i}$ satisfy the H\"older condition of order $1-d/p$.
Note also that our construction of $u$ does not use a special form of $\beta$: only
the condition $\beta\in L^p(\mathbb{R}^d)$ is needed.
\end{remark}

Let $u=(u^1,\ldots,u^d)$, where each $u^k$ is a  solution to equation (\ref{eq2}) from Proposition \ref{prop1}.
Below $u'$ and $\Phi'$ denote the Jacobi matrices of the mappings $u$ and $\Phi$.
Let us take a number $\delta$ from the hypotheses of Proposition \ref{prop1} such that for all $x\in\mathbb{R}^d$
the  inequality
$$
\|u'(x)\|\le \frac{1}{2} \quad \hbox{\rm and} \quad \tfrac{1}{2}\le \det\bigl(I+u'(x)\bigr)\le 2
$$
is fulfilled. Set
$$
\Phi(x)=x+u(x).
$$
We now establish some properties of the mapping $\Phi$.

\begin{proposition}\label{prop2}
{\rm (i)} The mapping $\Phi$ is a diffeomorphism of $\mathbb{R}^d$ of class $C^1$, moreover, the functions
$\partial_{x_i}\Phi^k$ are locally H\"older continuous.

{\rm (ii)} The inequalities
$$\frac{1}{2}\|x-y\|\le \|\Phi(x)-\Phi(y)\|\le 2\|x-y\|$$
hold.
\end{proposition}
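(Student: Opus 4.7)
The plan is to deduce everything from the two \emph{a priori} bounds $\|u'(x)\|\le 1/2$ and $\tfrac{1}{2}\le \det(I+u'(x))\le 2$ that were guaranteed by Proposition~\ref{prop1} (together with the choice of $\delta$ made before the statement), using also the Sobolev regularity $u\in W^{2,p}(\mathbb{R}^d)$ with $p>d$.

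First I would prove part~(ii), which is the easier half. Since $u\in C^1(\mathbb{R}^d)$ with $\sup_x\|u'(x)\|\le 1/2$, the mean value inequality yields $\|u(x)-u(y)\|\le (1/2)\|x-y\|$ for all $x,y$. Writing $\Phi(x)-\Phi(y)=(x-y)+(u(x)-u(y))$ and applying the triangle inequality in both directions gives $(1/2)\|x-y\|\le \|\Phi(x)-\Phi(y)\|\le (3/2)\|x-y\|\le 2\|x-y\|$.

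For part~(i) I would proceed in three steps. \textbf{Local diffeomorphism.} The inverse function theorem, applied at every point using $\det \Phi'(x)=\det(I+u'(x))\ge 1/2$, shows that $\Phi$ is a local $C^1$-diffeomorphism; in particular $\Phi(\mathbb{R}^d)$ is open. \textbf{Injectivity.} This is immediate from part~(ii), since $\|\Phi(x)-\Phi(y)\|\ge (1/2)\|x-y\|>0$ whenever $x\ne y$. \textbf{Surjectivity.} Part~(ii) also gives $\|\Phi(x)\|\ge (1/2)\|x\|-\|\Phi(0)\|\to\infty$ as $\|x\|\to\infty$, so $\Phi$ is a proper map. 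Hence $\Phi(\mathbb{R}^d)$ is closed: if $\Phi(x_n)\to y$, then $(x_n)$ is bounded, admits a convergent subsequence $x_{n_k}\to x$, and continuity of $\Phi$ yields $\Phi(x)=y$. A nonempty subset of $\mathbb{R}^d$ that is both open and closed must coincide with $\mathbb{R}^d$, so $\Phi$ is a bijection. Combining the three steps, $\Phi$ and $\Phi^{-1}$ are both $C^1$, so $\Phi$ is a $C^1$-diffeomorphism.

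Finally, the local H\"older continuity of $\partial_{x_i}\Phi^k = \delta_{ik}+\partial_{x_i}u^k$ follows from the Sobolev embedding $W^{2,p}(\mathbb{R}^d)\hookrightarrow C^{1,1-d/p}(\mathbb{R}^d)$ already recorded in the remark following Proposition~\ref{prop1}. There is no substantive obstacle in this argument; the only reason any proof is needed is to pin down the bi-Lipschitz constants and to justify the passage from a local diffeomorphism to a global one on all of $\mathbb{R}^d$, which rests on the properness obtained from part~(ii).
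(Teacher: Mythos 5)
Your proof is correct, and it takes a somewhat different route than the paper's on the key point of global bijectivity. The paper invokes the contraction mapping principle directly: since $u$ has Lipschitz constant $1/2$, for each $y$ the map $x\mapsto y-u(x)$ is a contraction and so has a unique fixed point, which gives bijectivity of $\Phi=I+u$ in a single stroke (this is the standard ``Lipschitz perturbation of the identity'' argument), after which the inverse function theorem upgrades the resulting homeomorphism to a $C^1$-diffeomorphism. You instead decompose the claim into local diffeomorphism (inverse function theorem with $\det\Phi'\ge 1/2$), injectivity (from the lower bound in (ii)), and surjectivity (properness from (ii) plus open-and-closed-and-connected). Both arguments are standard and both rest, in the end, on the same two estimates on $u'$; the contraction argument is more economical, whereas your topological argument is more explicit about what is being used at each step and would generalize to situations where the fixed-point phrasing is less convenient. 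One small stylistic note: you actually obtain the upper bound $\tfrac{3}{2}\|x-y\|$, which of course implies the stated $2\|x-y\|$; the paper states the looser constant $2$ because that is all it needs later. The appeal to the Sobolev embedding $W^{2,p}\hookrightarrow C^{1,1-d/p}$ for the H\"older continuity of $\partial_{x_i}\Phi^k$ matches the paper's remark and is fine.
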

\begin{proof}
Since  $u$ is a contracting mapping with the Lipschitz constant  $1/2$, the mapping $\Phi$ is a homeomorphism
and the stated inequalities hold. The inclusion  $u\in C^1(\mathbb{R}^d)$ yields  that $\Phi\in C^1(\mathbb{R}^d)$ and
the Jacobi matrix has the form $\Phi'=I+u'$. By the estimate $\|u'\|\le 1/2$ the matrix $\Phi'$ is invertible.
Therefore, $\Phi$ is a diffeomorphism.
\end{proof}

We recall that $B(x_0, 4R)\subset\Omega$.  Let  $\Psi=\Phi^{-1}$ and $y_0=\Phi(x_0)$ and consider the ball $B(y_0, 2R)$.
According to the inequalities in (ii) of Proposition~\ref{prop2}, we have the inclusions
$$
B(x_0, R)\subset\Psi(B(y_0, 2R))\subset B(x_0, 4R).
$$

\begin{proposition}\label{prop3}
Let $\varrho\in L^1_{loc}(\Omega)$ be a  solution to equation {\rm (\ref{eq1})}. Then the function
$$
\sigma(y)=|\det\Psi'(y)|\varrho(\Psi(y))
$$
on $B(y_0, 2R)$ satisfies the equation $\mathcal{L}^{*}\sigma=0$, where
$$
\mathcal{L}f(y)=q^{km}(y)\partial_{y_k}\partial_{y_m}f(y)+h^{k}(y)\partial_{y_k}f(y)
$$
and the coefficients have the form
$$
q^{km}(y)=a^{ij}(\Psi(y))\partial_{x_i}\Phi^k(\Psi(y))\partial_{x_j}\Phi^m(\Psi(y)), \quad
h^k(y)=\lambda u^k(\Psi(y)).
$$
\end{proposition}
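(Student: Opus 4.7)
The plan is to apply the general change of variables sketched at the start of \S2 and then use the equation satisfied by the components $u^k$ to identify the transformed drift with $\lambda u \circ \Psi$. The only ingredient that makes the drift simplify is that on $B(x_0, 4R)$ we have $\beta = b$, so each $u^k$ satisfies $Lu^k - \lambda u^k = -b^k$ almost everywhere there; by Proposition~\ref{prop2} this is precisely the region that covers $\Psi(B(y_0, 2R))$, which is where $\sigma$ lives.

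The first step is a pointwise chain-rule computation. For $\varphi \in C_0^{\infty}(B(y_0, 2R))$ set $\psi(x) = \varphi(\Phi(x))$; by Proposition~\ref{prop2}(ii), $\psi$ has compact support in $\Phi^{-1}(B(y_0, 2R)) \subset B(x_0, 4R)$. A direct calculation gives
\[
L\psi(x) = q^{km}(\Phi(x)) (\partial_{y_k}\partial_{y_m}\varphi)(\Phi(x)) + \bigl[a^{ij}(x)\partial_{x_i}\partial_{x_j}\Phi^k(x) + b^i(x)\partial_{x_i}\Phi^k(x)\bigr] (\partial_{y_k}\varphi)(\Phi(x)).
\]
Since $\Phi^k(x) = x^k + u^k(x)$, the bracket equals $Lu^k(x) + b^k(x)$, which by the equation for $u^k$ equals $\lambda u^k(x) = h^k(\Phi(x))$. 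Hence $L\psi(x) = (\mathcal{L}\varphi)(\Phi(x))$ on $B(x_0, 4R)$.

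The second step is the change of variables $y = \Phi(x)$ in the weak formulation. Using the identity $|\det\Psi'(\Phi(x))|\,|\det\Phi'(x)| = 1$ one obtains
\[
\int_{B(y_0, 2R)} (\mathcal{L}\varphi)(y)\, \sigma(y)\,dy \;=\; \int_{B(x_0, 4R)} L\psi(x)\, \varrho(x)\,dx.
\]
Were $\psi$ smooth, this would vanish by the defining property of $\varrho$. In reality $\psi$ is only in $W^{2,p}_{loc}$ with compact support (since $u \in W^{2,p} \cap C^1$ but no better), so I would mollify $\psi$ to a sequence $\psi_n \in C_0^{\infty}(B(x_0, 4R))$ with $\psi_n \to \psi$ in $W^{2,p}$, and then pass to the limit in $\int L\psi_n \cdot \varrho$.

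The main obstacle is justifying this limit, since the equation is only postulated against $C_0^{\infty}$ test functions. The first-order term $\int b^i \partial_{x_i}\psi_n\cdot \varrho$ converges by dominated convergence, using $b\varrho \in L^1_{loc}$ and uniform convergence of $\nabla\psi_n$ (which follows from Sobolev embedding since $p>d$). The second-order term $\int a^{ij} \partial_{x_i}\partial_{x_j}\psi_n\cdot \varrho$ is more delicate: to pair $D^2\psi_n$ (convergent in $L^p$) with $a^{ij}\varrho$, one needs $\varrho \in L^{p/(p-1)}_{loc}$. This additional integrability is exactly the $L^p_{loc}$-regularity of solutions to double divergence form equations with VMO diffusion and $L^{d+}$ drift noted in the introduction. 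Once this is invoked the limit identity reads $\int \mathcal{L}\varphi \cdot \sigma = 0$ for every $\varphi \in C_0^{\infty}(B(y_0, 2R))$, which is the claim.
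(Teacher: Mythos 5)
Your argument is essentially the paper's. Both proofs rest on the same two pillars: the pointwise chain-rule identity $L(\varphi\circ\Phi)(x)=(\mathcal L\varphi)(\Phi(x))$, in which $a^{ij}\partial_{x_i}\partial_{x_j}\Phi^k+b^i\partial_{x_i}\Phi^k=Lu^k+b^k=\lambda u^k$ because $\beta=b$ on $B(x_0,4R)$, and the fact that $\varphi\circ\Phi$ is a legitimate test function even though it is only in $W^{2,p}$ with compact support in $B(x_0,4R)$. The paper simply asserts that $W^{2,p}$ test functions may be substituted once one knows $\varrho\in L^{p/(p-1)}(B(x_0,4R))$; you spell out the mollification that justifies this, which is the argument the paper leaves implicit.

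One small inaccuracy in your last paragraph: the integrability $\varrho\in L^{p/(p-1)}_{loc}$ you need does \emph{not} require the $L^q_{loc}$-for-all-$q$ regularity for VMO diffusions that you cite. Since $p>d$, the dual exponent $p/(p-1)$ is strictly less than $d/(d-1)$, and the inclusion $\varrho\in L^r_{loc}$ for every $r<d/(d-1)$ is an elementary consequence of the definition of a solution (this is what the paper invokes, citing the opening remarks of the proof of Theorem~2.1 in \cite{BSH17}). Invoking the stronger VMO integrability here would in fact be methodologically delicate, since in this paper that kind of higher regularity is one of the targets of Zvonkin's transform rather than an independent input. So replace that appeal with the elementary $L^r$, $r<d/(d-1)$, bound and the proof is clean.
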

\begin{proof}
The function $\varrho$ belongs to  $L^{r}(B(x_0, 4R))$ for all $1\le r<d/(d-1)$
(see the comments at the beginning of the proof of \cite[Theorem 2.1]{BSH17}).
In particular, for $p>d$ the value $p/(p-1)$ is less than $d/(d-1)$ and the inclusion $\varrho\in L^{p/(p-1)}(B(x_0, 4R))$ holds.
Therefore, in the integral equality determining the solution, in place of test functions of class
$C_0^{\infty}(\Omega)$ we can substitute test  functions of class $W^{p,2}(\Omega)$ with compact support in $B(x_0, 4R)$.
For every function $\varphi\in C_0^{\infty}(B(y_0, 2R))$ (outside the ball $B(y_0, 2R)$ we always extend $\varphi$
by zero), the function $\varphi\circ\Phi$ belongs to $W^{2, p}(B(x_0, 4R))$, has compact support in $B(x_0, 4R)$
and satisfies the equality
\begin{multline*}
\int_{B(x_0, 4R)}\Biggl(\Bigl[a^{ij}(x)\partial_{x_i}\Phi^k(x)\partial_{x_j}\Phi^m(x)\Bigr]\partial_{y_k}\partial_{y_m}\varphi(\Phi(x))
\\
+\Bigl[a^{ij}(x)\partial_{x_i}\partial_{x_j}\Phi^k(x)+b^i(x)\partial_{x_i}\Phi^k(x)\Bigr]\partial_{y_k}\varphi(\Phi(x))\Biggr)
\varrho(x)\,dx=0.
\end{multline*}
Since $b=\beta$ on $B(x_0, 4R)$ and $\Phi^k=x_k+u^k(x)$, we have
$$
a^{ij}(x)\partial_{x_i}\partial_{x_j}\Phi^k(x)+b^i(x)\partial_{x_i}\Phi^k(x)=\lambda u^k(x).
$$
Then
$$
\int_{B(x_0, 4R)}\Bigl(q^{km}(\Phi(x))\partial_{y_k}\partial_{y_m}\varphi(\Phi(x))+
\lambda u^k(x)\partial_{y_k}\varphi(\Phi(x))\Bigr)
\varrho(x)\,dx=0.
$$
Using the change of variable $y=\Phi(x)$ and taking into account that the support of $\varphi$ belongs to $B(y_0, 2R)$, we obtain
$$
\int_{B(y_0, 2R)}\Bigl(q^{km}(y)\partial_{y_k}\partial_{y_m}\varphi(y)+
\lambda u^k(\Psi(y))\partial_{y_k}\varphi(y)\Bigr)
\varrho(\Psi(y))|\det\Psi'(y)|\,dy=0.
$$
Since $\varphi$ was arbitrary, we conclude that $\sigma(y)=\varrho(\Psi(y))|\det\Psi'(y)|$ is a solution
to the equation $\mathcal{L}^{*}\sigma=0$.
\end{proof}

Observe that the vector field $h(y)=\lambda u(\Psi(y))$ is continuously differentiable on the ball
$B(y_0, 2R)$.
In addition, the derivatives of $\Phi$ also satisfy the H\"older condition.
Therefore, the function $\sigma$ on $B(y_0, 2R)$ satisfies the equation $\mathcal{L}^{*}\sigma=0$,
in which the coefficients $q^{mk}$ of the second order terms form a nondegenerate matrix and belong to the class $VMO$ and the
coefficients $h^k$ are continuous on $B(y_0, 2R)$. This enables us to apply the results from the papers \cite{BSH17}, \cite{DongEscKim},
and \cite{Sjogan73}, \cite{Sjogan75} to the function $\sigma$ and then to transfer them to $\varrho$. Let us give an
example demonstrating a simple derivation of the known result of \cite[Theorem 3.1]{BSH17}) from the case of a nice drift.

We recall  that a mapping satisfies Dini's condition if for its modulus of continuity $\omega$ we have
$$
\int_0^1\frac{\omega(t)}{t}\,dt<\infty.
$$

\begin{example}\label{prim1}
\rm
If  conditions $\rm\bf H_a$ and $\rm\bf H_b$ are fulfilled and the matrix $A$ satisfies Dini's condition, then every solution
 $\varrho\in L^1_{loc}(\Omega)$ to equation {\rm (\ref{eq1})} has a continuous version.
\end{example}
\begin{proof}
Let us verify the existence of a continuous version of $\varrho$ on the ball $B(x_0, R/2)\subset B(x_0, 4R)\subset\Omega$.
Let $\Phi$ be the diffeomorphism constructed above.
By Proposition \ref{prop3} the function
$
\sigma(y)=\varrho(\Psi(y))|\det\Psi'(y)|
$
 satisfies on $B(y_0, 2R)$ an equation with some coefficients for which  the hypotheses of \cite[Theorem 1]{Sjogan75}
 are fulfilled, that is, the matrix $(q^{mk})$ is nondegenerate  and the functions $q^{mk}$, $h^k$ satisfy  Dini's condition.
Hence $\sigma$  has a continuous version on $B(y_0, R)$. Since $\Phi$ is a diffeomorphism of class $C^1$,
the mappings $\Phi$ and $\Psi$ take sets of measure zero to sets of measure zero and a modification of the function $\sigma$ on a  set
of measure zero yields a change of $\varrho$ on a set of measure zero. Therefore, the function $\varrho$
has a continuous version on $B(x_0, R/2)$.
\end{proof}

Note that on the ball $B(x_0, R/2)\subset B(x_0, 4R)\subset\Omega$ the modulus of
continuity of the solution $\varrho$ depends only on the quantities $d$, $p$, $R$, $\omega$, $\nu$, and $\|b\|_{L^p(B(x_0, 4R))}$.

In a similar way, by using \cite[Theorem 2]{Sjogan73} one can derive the H\"older continuity of the solution,
provided that the functions $a^{ij}$ are H\"older continuous. However, unlike
\cite[Theorem 3.1]{BSH17}, this method does not ensure the H\"older order of the
solution to be equal to the H\"older order of the matrix $A$, because the expression for the coefficients $q^{mk}$
involves the derivatives of  mapping $\Phi$, but their H\"older order depends on $d$ and $p$.

\section{Regularity of solutions}

In this section we apply Zvonkin's transform for establishing the regularity of solutions.
We first discuss the  case where the matrix $A$ satisfies the classical  Dini condition, then
consider the Dini mean oscillation condition, and finally study the integrability of solutions  without the assumption about Dini's  condition.

The next  assertion generalizes the Harnack inequality to the case where the  diffusion matrix satisfies Dini's condition
and the  drift coefficient is locally unbounded (and is merely integrable to some power larger than the dimension).
In the known results, the drift  coefficient is either zero or locally bounded, which has been substantially used in the proofs.

\begin{theorem}\label{th2}
If $a^{ij}$, $b^i$ satisfy conditions $\rm\bf H_a$ and $\rm\bf H_b$ and the matrix $A$ satisfies Dini's  condition,
then the continuous version of every nonnegative solution $\varrho\in L^1_{loc}(\Omega)$ to equation {\rm (\ref{eq1})}
satisfies the Harnack inequality, that is, for every ball $B(x_0, R/2)\subset B(x_0, 4R)\subset\Omega$ there exists a number $C$ such  that
$$
\sup_{x\in B(x_0, R/2)}\varrho(x)\le C\inf_{x\in B(x_0, R/2)}\varrho(x),
$$
where $C$ depends on $R$, $\omega$, $d$, $\nu$, $p$, and $\|b\|_{L^p(B(x_0, 4R))}$, but does not depend on the solution $\varrho$.
\end{theorem}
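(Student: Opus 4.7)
The plan is to use the Zvonkin transform from Section~2 to reduce the claim to the known Harnack inequality for double divergence form equations with a Dini matrix and a bounded drift, which is \cite[Theorem 3.1]{BSH17}. In a sentence: absorb the singular drift $b$ into the change of variables, apply the Dini-matrix/bounded-drift Harnack inequality to the transformed solution $\sigma$, and push the conclusion back to $\varrho$ through the diffeomorphism.

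I would start by fixing $B(x_0,4R)\subset\Omega$, extending $A$ to all of $\mathbb{R}^d$ by the cutoff prescription $\psi A+(1-\psi)I$ from Section~2 (which preserves both $\rm\bf H_a$ and Dini's condition), and cutting $b$ off outside $B(x_0,4R)$ to obtain $\beta\in L^p(\mathbb{R}^d)$ with $p>d$. Proposition~\ref{prop1} then yields $u=(u^1,\ldots,u^d)\in W^{2,p}(\mathbb{R}^d)\cap C^{1+\alpha}(\mathbb{R}^d)$, $\alpha=1-d/p$, with $\|u'\|_\infty\le 1/2$. Setting $\Phi=I+u$, $\Psi=\Phi^{-1}$ and $y_0=\Phi(x_0)$, Proposition~\ref{prop3} gives that the nonnegative function $\sigma(y)=\varrho(\Psi(y))|\det\Psi'(y)|$ satisfies $\mathcal{L}^{*}\sigma=0$ on $B(y_0,2R)$ with diffusion matrix $(q^{km})$ and drift $h^k(y)=\lambda u^k(\Psi(y))$.

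Next I would verify that $(q^{km})$ and $h$ satisfy the hypotheses of \cite[Theorem 3.1]{BSH17}. Uniform ellipticity of $(q^{km})$ on $B(y_0,2R)$ follows from $\rm\bf H_a$ and the two-sided bound $\tfrac12\le\|\Phi'\|,\|\Psi'\|\le 2$. Since $u\in C^{1+\alpha}$, the entries $\partial_{x_i}\Phi^k$ are globally H\"older of order $\alpha$ (hence Dini); composing $a^{ij}$ with the Lipschitz map $\Psi$ preserves Dini continuity with a modulus controlled by $\omega$, and products of bounded Dini-continuous functions are Dini, so $(q^{km})$ has a modulus of continuity satisfying Dini's integral condition with bounds depending only on $d,p,\nu,\omega,\|b\|_{L^p(B(x_0,4R))}$. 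The drift $h=\lambda u\circ\Psi$ is bounded and continuous because $u\in C^{1+\alpha}(\mathbb{R}^d)$ is bounded by the $W^{2,p}$-estimate of Proposition~\ref{prop1}. Applying \cite[Theorem 3.1]{BSH17} to $\sigma$ on $B(y_0,R)\subset B(y_0,2R)$ yields
$$
\sup_{B(y_0,R)}\sigma\le C_1\inf_{B(y_0,R)}\sigma,
$$
with $C_1$ depending only on the stated parameters. Transferring back via Proposition~\ref{prop2}, one uses $\Phi(B(x_0,R/2))\subset B(y_0,R)$ (because $\Phi$ at most doubles distances) together with $\tfrac12\le|\det\Phi'|\le 2$ and the identity $\varrho(x)=\sigma(\Phi(x))|\det\Phi'(x)|$ to conclude
$$
\sup_{B(x_0,R/2)}\varrho\le 2\sup_{B(y_0,R)}\sigma\le 2C_1\inf_{B(y_0,R)}\sigma\le 4C_1\inf_{B(x_0,R/2)}\varrho,
$$
which is the claimed Harnack inequality with $C=4C_1$.

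The hard part will be the quantitative bookkeeping for Dini's condition in the middle step: I need to exhibit an explicit modulus of continuity $\omega_q$ for $(q^{km})$ built from $\omega$ and the H\"older modulus of $\Phi'$, and verify that $\int_0^1\omega_q(t)/t\,dt<\infty$ with a bound depending only on $R,\omega,d,\nu,p,\|b\|_{L^p(B(x_0,4R))}$. Everything else (uniform ellipticity, boundedness of $h$, the change of variables, and the final comparison of sup and inf) is a routine consequence of the bi-Lipschitz estimates already established for $\Phi$.
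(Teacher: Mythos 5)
Your proof follows essentially the same route as the paper: Zvonkin's transform to pass to $\sigma$, the known Harnack inequality for a Dini matrix with bounded drift, and the bi-Lipschitz transfer back to $\varrho$, and the final chain of estimates and the verification that $(q^{km})$ inherits Dini's condition from $\omega$ and the H\"older modulus of $\Phi'$ are all correct. The only slip is the citation: the Harnack inequality you invoke from Bogachev--Shaposhnikov is Corollary~3.6 of that paper, not Theorem~3.1 (which is the continuity statement used in Example~\ref{prim1}).
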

\begin{proof}
Let
$B(x_0, R/2)\subset B(x_0, 4R)\subset \Omega$
and let $\Phi$ be the diffeomorphism constructed above.
As above, according to Proposition \ref{prop3} the function
$$
\sigma(y)=\varrho(\Psi(y))|\det\Psi'(y)|
$$
on $B(y_0, 2R)$ satisfies the equation with coefficients for which  the hypotheses of \cite[Corollary 3.6]{BSH17} are fulfilled,
i.e., the matrix $(q^{mk})$ is nondegenerate, the functions $q^{mk}$ satisfy Dini's condition   and the
functions $h^k$ are bounded. Therefore, there exists a number $C$ depending on the objects listed above such that
$$
\sup_{y\in B(y_0, R)}\sigma(y)\le C\inf_{y\in B(y_0, R)}\sigma(y).
$$
Since $2^{-1}\le |\det\Psi'(y)|\le 2$, we have
$$
\sup_{y\in B(y_0, R)}\varrho(\Psi(y))\le 4C\inf_{y\in B(y_0, R)}\varrho(\Psi(y)).
$$
By the inclusion  $B(x_0, R/2)\subset \Psi(B(y_0, R))$ we have
$$
\sup_{x\in B(x_0, R/2)}\varrho(x)\le \sup_{y\in B(y_0, R)}\varrho(\Psi(y)), \quad
\inf_{y\in B(y_0, R)}\varrho(\Psi(y))\le \inf_{x\in B(x_0, R/2)}\varrho(x).
$$
Therefore,
$
\sup_{x\in B(x_0, R/2)}\varrho(x)\le 4C\inf_{x\in B(x_0, R/2)}\varrho(x).
$
\end{proof}

\begin{remark}\label{remarkkruj}\rm
Using the method suggested in \cite{Krujkov} and \cite{OlKruj}, increasing the dimension,
it is possible to add a potential term to the  drift coefficient. Let $\varrho$ be a solution in $\mathbb{R}^d$
to the equation
\begin{equation}\label{eq3}
\partial_{x_i}\partial_{x_j}\bigl(a^{ij}\varrho\bigr)-\partial_{x_i}\bigl(b^i\varrho\bigr)+c\varrho=0
\end{equation}
Then on $\mathbb{R}^{d+1}=\mathbb{R}^d_x\times\mathbb{R}_y^1$ the function $\varrho$ satisfies the equation
$$
\partial_{x_i}\partial_{x_j}\bigl(a^{ij}\varrho\bigr)+\partial^2_y\varrho-\partial_{x_i}\bigl(b^i\varrho\bigr)
-\partial_y(-cy\varrho)=0.
$$
Using this approach, one can apply the result obtained above  to the equation with the potential term $c\varrho$,
but in this case it is necessary to assume a higher integrability of the coefficients: $b^i, c\in L^{p}_{loc}(\Omega)$ with $p>d+1$.

Note also that the equation with a nonzero drift coefficient  $b$ can be transformed in a similar way into an equation without the drift.
Let $\varrho$ be a solution to equation~(\ref{eq1}) on $\mathbb{R}^d$. Then on $\mathbb{R}^{d+1}=\mathbb{R}^d_x\times(0, 1)$
the function $\varrho$ satisfies the  equation
$$
\partial_{x_i}\partial_{x_j}\bigl(a^{ij}\varrho\bigr)+M\partial^2_y\varrho
-\partial_y\partial_{x_i}\bigl((2+\tfrac{y}{2})b^i\varrho\bigr)
-\partial_{x_i}\partial_y\bigl((2+\tfrac{y}{2})b^i\varrho\bigr)=0.
$$
The new matrix $\widetilde{A}$ has the form
$$
\left(
  \begin{array}{cccc}
    a^{11} & a^{12} & \ldots & (2+\tfrac{y}{2})b^1 \\
    a^{21} & a^{22} & \ldots & (2+\tfrac{y}{2})b^2 \\
  \cdot & \cdot & \ldots & \cdot \\
  (2+\tfrac{y}{2})b^1 & (2+\tfrac{y}{2})b^2 & \ldots & M \\
  \end{array}
\right).
$$
For every $\xi\in\mathbb{R}^{d+1}$ we have the equality
$$
\langle\widetilde{A}\xi, \xi\rangle=\sum_{i, j=1}^da^{ij}\xi_i\xi_j+\sum_{j=1}^d(4+y)b^j\xi_j\xi_{d+1}+M\xi_d^2.
$$
If the functions $b^i$ are bounded and $A\ge m\cdot I$, then for $M$ sufficiently large the matrix $\widetilde{A}$
is positive definite. A certain drawback of such transformations is the necessity to impose on $b$ the same restrictions as on $a^{ij}$,
for example, to require the continuity and Dini's condition. In the next remark we shall show how one can accomplish smoothing of coefficients
$b$ and $c$ with the aid of renormalization of solutions and Zvonkin's transform.
\end{remark}

\begin{remark}\label{remarkpot}\rm
Let $\varrho\in L^1_{loc}(\Omega)$ be a  solution to equation (\ref{eq3}), where $c\in L^p_{loc}(\Omega)$ for some $p>d$,
the coefficients $a^{ij}$, $b^i$ satisfy conditions $\rm\bf H_a$ and $\rm\bf H_b$ and the matrix $A$ is continuous.
Let $B(x_0, 4R)\subset\Omega$ and let $\Phi$ be the diffeomorphism constructed before Proposition~\ref{prop2}.
Set $y_0=\Phi(x_0)$ and $\Psi=\Phi^{-1}$.
Similarly to Proposition \ref{prop3}, one verifies that the function
$$
\sigma(y)=|{\rm det}\Psi'(y)|\varrho(\Psi(y))|
$$
 satisfies the equation
$\mathcal{L}^{*}\sigma+g\sigma=0$ on $B(y_0, 2R)$, where
$$
\mathcal{L}f(y)=q^{km}(y)\partial_{y_k}\partial_{y_m}f(y)+h^{k}(y)\partial_{y_k}f(y)
$$
and the coefficients have the form
$$
q^{km}(y)=a^{ij}(\Psi(y))\partial_{x_i}\Phi^k(\Psi(y))\partial_{x_j}\Phi^m(\Psi(y)), \quad
h^k(y)=\lambda u^k(\Psi(y)), \quad g(y)=c(\Psi(y)).
$$
Let us observe that $h^k$ is a continuously differentiable function on $B(y_0, 2R)$, $g\in L^p(B(y_0, 2R))$,
$Q=(q^{km})$ satisfies condition $\rm\bf H_a$, and the function $q^{km}$ is continuous. Let $\gamma>0$.

Let us consider the Dirichlet problem
$$
\mathcal{L}u+(g-\gamma)u=0 \quad \hbox{\rm on} \quad B(y_0, R), \quad u=1 \quad \hbox{\rm on} \quad \partial B(y_0, R).
$$
Since we do not assume that the coefficient $g$ is bounded from above, for completeness we give a short justification
of the existence of a positive  solution under the condition
that the number $\gamma$ is sufficiently large. It is clear that it suffices to consider the Cauchy problem with zero boundary condition and some
right-hand side. Set $B=B(y_0, R)$. By \cite[Theorem 9.14 and Theorem 9.15]{GT} (see also \cite[Chapter 8]{Krbook})
there exists $\gamma_0>0$ such  that for every $\gamma>\gamma_0$ the Dirichlet problem $\mathcal{L}v-\gamma v=f$ on $B$ with $v=0$ on $\partial B$
has a solution $v\in W^{p, 2}(B)\cap W^{p, 1}_0(B)$ for every function $f\in L^p(B)$.
In addition, for all $\gamma>\gamma_0$ and $v\in W^{p, 2}(B)\cap W^{p, 1}_0(B)$ we have the estimate
$$
\gamma\|u\|_{L^p(B)}+\sqrt{\gamma}\|\nabla u\|_{L^p(B)}+\|D^2u\|_{L^p(B)}\le N_1\|\mathcal{L}v-\gamma v\|_{L^p(B)}.
$$
By the   embedding theorem $\|v\|_{L^{\infty}(B)}\le N_2\|Dv\|_{L^p(B)}$ and
$$
\|gv\|_{L^p(B)}\le N_2\|g\|_{L^p(B)}\|Dv\|_{L^p(B)}.
$$
Therefore, for $\gamma$  sufficiently large and every $v\in W^{p, 2}(B)\cap W^{p, 1}_0(B)$ we have the estimate
$$
\|v\|_{W^{p, 2}(B)}\le N_3\|\mathcal{L}v+(g-\gamma)v\|_{L^p(B)}.
$$
The method of continuation with respect to a parameter gives a  solution $v\in W^{p, 2}(B)\cap W^{p, 1}_0(B)$
to the equation $\mathcal{L}v+(g-\gamma)v=f$ for every $f\in L^p(B)$, hence a solution $u$ to the considered Dirichlet problem.
We show that it is positive. For a function $\eta$ let $\eta^{-}$ and $\eta^{+}$ be the negative and
positive parts  of $\eta$. Let $w=-u$. If $w\le 0$, then everything is proved. If $w$ is positive somewhere, then
$\sup_Bw=\sup_Bw^{+}$. Since
$$
\mathcal{L}w+(g-\gamma)^{-}w\ge -(g-\gamma)^{+}w^{+},
$$
by the   maximum principle (see \cite[Theorem 9.1]{GT}) we have
$$
\sup_B w^{+}\le N\|(g-\gamma)^{+}w^{+}\|_{L^d(B)}\le N\|(g-\gamma)^{+}\|_{L^d(B)}\sup_B w^{+}.
$$
Take  $\gamma$ so large that $N\|(g-\gamma)^{+}\|_{L^d(B)}<1$. Then $\sup_B w^{+}\le 0$. Therefore, $u\ge 0$.
The strict positivity follows from the Harnack inequality  (see, e.g., \cite{Trud80}).
Finally, we observe that by the Sobolev  embedding theorem the function $u$ has a continuously differentiable version,
moreover, $u_{x_i}$ belongs to some H\"older space.
We shall work with this version. Substituting into the integral equality
$$
\int\bigl[\mathcal{L}\varphi+g\varphi\bigr]\sigma\,dy=0
$$
in place of the function $\varphi$ the function $u\psi$, where $\psi\in C_0^{\infty}(B(y_0, R))$, we arrive at the equality
$$
\int\bigl[\mathcal{L}\psi+2\langle u^{-1}Q\nabla u, \nabla \psi\rangle+\gamma\psi\bigr]u\sigma\,dy=0.
$$
Therefore, the function $u\sigma$ is a solution to the equation $\widetilde{\mathcal{L}}^{*}(u\sigma)+\gamma(u\sigma)=0$
with the operator
$$
\widetilde{\mathcal{L}}f=\mathcal{L}f(y)+2\langle u^{-1}(y)Q(y)\nabla u(y), \nabla f(y)\rangle.
$$
Thus, after these transformations we obtain the equation in which the coefficient $c$ is constant and the coefficient $b$ is continuous
and even belongs to some H\"older class. Next, we apply the transform from Remark \ref{remarkkruj} and arrive at the equation with zero
coefficients $b$ and $c$. Moreover, if the original matrix $A$ satisfies Dini's condition, then after these transformations
it also satisfy this condition, in particular, Theorem \ref{th2} extends to equations with the zero order term $c\varrho$,
provided that $c\in L^p_{loc}(\Omega)$.
\end{remark}

As already noted above,  in \cite[Lemma 4.2]{DongEscKim} the Harnack inequality was established under the assumption
that $b=0$ and $A$ satisfies the Dini mean oscillation condition. Using   Zvonkin's transform  and the methods of killing
first and zero order terms explained above, we can generalize this assertion and obtain
an analog of Theorem \ref{th2} for the  matrix $A$ that satisfies the Dini mean oscillation condition.

Following \cite{DongKim} and \cite{DongEscKim}, we shall say that a measurable function $f$ on $\mathbb{R}^d$ satisfies
the Dini mean oscillation condition if for some $t_0>0$
$$
\int_0^{t_0}\frac{w(r)}{r}\,dr<\infty,
$$
where
$$
w(r)=\sup_{x\in \Omega}\frac{1}{|\Omega(x, r)|}\int_{\Omega(x, r)}\Bigl|f(y)-f_{\Omega}(x, r)\Bigr|\,dy,
$$
$$
f_{\Omega}(x, r)=\frac{1}{|\Omega(x, r)|}\int_{\Omega(x, r)}f(y)\,dy, \quad \Omega(x, r)=\Omega\cap B(x, r).
$$

Clearly, the classical Dini condition implies the Dini mean oscillation condition.
If there exists a number $N$ such that $|\Omega(x, r)|\ge Nr^d$
for all $x\in\Omega$ and $0<r<{\rm diam}\,\Omega$,
then, according to \cite[Lemma A1]{HwangKim}, any measurable function $f$ satisfying the Dini mean oscillation condition,
has a version uniformly continuous on $\Omega$, moreover, its modulus of continuity is estimated by
$\displaystyle\int_0^{|x-y|}\frac{w(r)}{r}\,dr$. We shall work with this continuous version.
In addition, according to \cite[Lemma 2.1]{DongEscKim}, the product $fg$ satisfies the Dini mean oscillation condition
if $g$ satisfies the classical Dini condition   and $f$ satisfies the Dini mean oscillation condition.

Suppose that a function $f$ is defined on $\mathbb{R}^d$ and
$|\Omega(x, r)|\ge Nr^d$ for all points $x\in\Omega$ and $r\in (0,{\rm diam}\,\Omega)$.
Then for all $x\in\Omega$ and some constant $C(d, N)>0$ the estimate
$$
\frac{1}{|\Omega(x, r)|}\int_{\Omega(x, r)} |f(y)-f_{\Omega}(x, r)|\,dy\le
\frac{C(d, N)}{|B(x, r)|}\int_{B(x, r)} |f(y)-f_{B}(x, r)|\,dy
$$
holds, where
$$
f_{B}(x, r)=\frac{1}{|B(x, r)|}\int_{B(x, r)}f(y)\,dy.
$$
Therefore, in order to verify the Dini mean oscillation condition it suffices to show  that
$$
\int_0^{t_0}\frac{\widetilde{w}(r)}{r}\,dr<\infty, \quad \hbox{where} \quad
\widetilde{w}(r)=\sup_{x\in\Omega}\frac{1}{|B(x, r)|}\int_{B(x, r)} |f(y)-f_{B}(x, r)|\,dy.
$$
We need the following observations.

\begin{lemma}\label{Dinisave}
{\rm (i)} Suppose that a function $f$ on $B(z, 4R)\subset\mathbb{R}^d$ satisfies  the Dini mean oscillation condition,
$\Phi\colon \mathbb{R}^d\to\mathbb{R}^d$ is a diffeomorphism of class $C^1$, the functions
$\partial_{x_i}\Phi^j$ are H\"older continuous and $2^{-1}\|x-y\|\le\|\Phi(x)-\Phi(y)\|\le 2\|x-y\|$. Then
the function $f\circ\Phi$ satisfies the Dini mean oscillation condition on $B(z', R)$, where $z'=\Phi^{-1}(z)$.

{\rm (ii)} Suppose that a function $f$ on $B(z, 4R)\subset\mathbb{R}^d$ satisfies the Dini mean oscillation condition. Then the function
$$F(x_1, \ldots, x_d, x_{d+1})=f(x_1, x_2, \ldots, x_d)$$
satisfies the Dini mean oscillation condition on $B((z, z_{d+1}), R)$ for every $z_{d+1}$.
\end{lemma}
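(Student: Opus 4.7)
The plan is to use the reduction stated just before the lemma: in both settings the domain satisfies $|\Omega(x,r)|\ge Nr^d$, so it suffices to verify the Dini integrability of the simpler quantity
$$
\widetilde{w}(r)=\sup_{x\in\Omega}\frac{1}{|B(x,r)|}\int_{B(x,r)}|g(y)-g_B(x,r)|\,dy,
$$
and we may replace $g_B(x,r)$ by any convenient constant $c$ at the cost of a factor $2$. So in each part I aim to exhibit a good constant $c$ and a simple change-of-variables or slicing comparison to the original $\widetilde{w}_f$.

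For part (i), I first shrink $t_0$ so that $t_0\le R$. Then for $x\in B(z',R)$ and $0<r<t_0$, the bi-Lipschitz bound gives $\Phi(B(x,r))\subset B(\Phi(x),2r)$ and, since $\|\Phi(x)-z\|=\|\Phi(x)-\Phi(z')\|\le 2\|x-z'\|<2R$, also $B(\Phi(x),2r)\subset B(z,4R)$, so the reference mean $c:=f_B(\Phi(x),2r)$ is well defined. Changing variables $w=\Phi(y)$ and using $|\det\Psi'(w)|\le 2^d$ (a consequence of $\|\Psi'\|\le 2$), I estimate
$$
\int_{B(x,r)}|f(\Phi(y))-c|\,dy\le 2^d\!\!\int_{\Phi(B(x,r))}\!\!|f(w)-c|\,dw\le 2^d\!\!\int_{B(\Phi(x),2r)}\!\!|f(w)-c|\,dw\le 2^d|B(\Phi(x),2r)|\widetilde{w}_f(2r).
$$
Dividing by $|B(x,r)|=c_d r^d$ gives $\widetilde{w}_{f\circ\Phi}(r)\le C\,\widetilde{w}_f(2r)$ with $C$ depending only on $d$, whence $\int_0^{t_0}r^{-1}\widetilde{w}_{f\circ\Phi}(r)\,dr\le C\int_0^{2t_0}s^{-1}\widetilde{w}_f(s)\,ds<\infty$. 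The H\"older regularity of $\partial_{x_i}\Phi^j$ enters only to ensure $\det\Psi'$ is bounded and the pushforward makes sense; no finer regularity is needed.

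For part (ii), I use the slicing: for $\hat x=(x,x_{d+1})$ and $0<r<t_0$, integration in the last variable at fixed $y'\in B(x,r)\subset\mathbb R^d$ contributes a factor at most $2\sqrt{r^2-\|y'-x\|^2}\le 2r$. With $c=f_B(x,r)$,
$$
\int_{B(\hat x,r)}|F(\hat y)-c|\,d\hat y
=\int_{B(x,r)}2\sqrt{r^2-\|y'-x\|^2}\,|f(y')-c|\,dy'
\le 2r\int_{B(x,r)}|f(y')-c|\,dy'.
$$
Dividing by $|B(\hat x,r)|=c_{d+1}r^{d+1}$ and using $|B(x,r)|=c_d r^d$ gives $\widetilde{w}_F(r)\le (2c_d/c_{d+1})\widetilde{w}_f(r)$, and integrating in $r$ preserves the Dini condition. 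The main (modest) obstacle in both parts is the geometric bookkeeping: choosing $t_0$ so that the auxiliary ball $B(\Phi(x),2r)$ (resp.\ $B(x,r)\subset\mathbb R^d$) always lies in the domain $B(z,4R)$ where $f$ is defined, and confirming that the Jacobian factor from the change of variables only produces a constant depending on $d$ and on the bi-Lipschitz norm of~$\Phi$; once this is in place, everything reduces to a one-line comparison between $\widetilde{w}_{f\circ\Phi}$ (or $\widetilde{w}_F$) and $\widetilde{w}_f$.
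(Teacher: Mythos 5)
Your proof is correct in both parts, and for part (i) it takes a genuinely more economical route than the paper's. The paper first reduces to the Jacobian-weighted pushforward $g(x)=f(\Phi(x))\,|\det\Phi'(x)|$ (using that $|\det\Phi'|^{-1}$ is H\"older, hence classical Dini, so multiplying a DMO function by it preserves DMO), and then it carefully compares $g_B(x,r)$ with $f_B(\Phi(x),2r)$; the H\"older continuity of $\partial_{x_i}\Phi^j$ is used there to show that both the pointwise oscillation of $|\det\Phi'|^{-1}$ over $B(x,r)$ and the volume discrepancy $\bigl|1-|\Phi(B(x,r))|/(|\det\Phi'(x)|\,|B(x,r)|)\bigr|$ are $O(r^\gamma)$. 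Your argument avoids the Jacobian factor entirely: fixing the reference constant $c=f_B(\Phi(x),2r)$, paying the standard factor of $2$ to replace the true mean by $c$, and using only the bi-Lipschitz inclusion $\Phi(B(x,r))\subset B(\Phi(x),2r)$ together with $|\det\Psi'|\le 2^d$, you get $\widetilde w_{f\circ\Phi}(r)\le C(d)\,\widetilde w_f(2r)$ in one line. Consequently the H\"older hypothesis plays no role in your proof of (i); its real purpose in the paper is in the application (Theorem~\ref{th12}), where the transformed coefficients are products of the form $a^{ij}\circ\Psi\cdot\partial_{x_i}\Phi^k\circ\Psi\cdot\partial_{x_j}\Phi^m\circ\Psi$ and one needs the H\"older factors to keep DMO stable under multiplication. (A small misstatement: you say the H\"older regularity of $\Phi'$ serves to make $\det\Psi'$ bounded, but boundedness already follows from the bi-Lipschitz estimate alone.) For part (ii) your Fubini slicing is essentially the paper's argument, just streamlined by comparing against $c=f_B(x,r)$ rather than computing $F_B(\hat x,r)$ explicitly.
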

\begin{proof}
Let $0<r<2R$ and
$$
w(r)=\sup_{x\in B(z, 2R)}\frac{1}{|B(x, r)|}\int_{B(x, r)} |f(y)-f_{B}(x, r)|\,dy.
$$
Let us prove (i). By assumption the functions $\partial_{x_i}\Phi^j$ are the H\"older continuous of some order $\gamma\in (0, 1)$.
It suffices to verify that the Dini mean oscillation condition is fulfilled for the
function $g(x)=f(\Phi(x))|{\rm det}\,\Phi'(x)|$. Let $x\in B(z', 2R)$ and $0<r<R$. We have
$$
\frac{1}{|B(x, r)|}\int_{B(x, r)}\Bigl|g(v)-g_{B}(x, r)\Bigr|\,dv\le
\frac{1}{|B(x, r)|}\int_{B(\Phi(x), 2r)}\Bigl|f(y)-|{\rm det}\,\Phi'(\Phi^{-1}(y))|^{-1}g_{B}(x, r)\Bigr|\,dy.
$$
Let us estimate the difference
$$
|{\rm det}\,\Phi'(\Phi^{-1}(y))|^{-1}g_{B}(x, r)-f_{B}(\Phi(x), 2r).
$$
Let replace $|{\rm det}\,\Phi'(\Phi^{-1}(y))|^{-1}$ by
$|{\rm det}\,\Phi'(x)|^{-1}$. The difference between these two numbers is estimated by $N_1r^{\gamma}$ with some constant $N_1>0$.
Observe that
\begin{multline*}
g_{B}(x, r)=\frac{1}{|B(x, r)|}\int_{\Phi(B(x, r))}f(y)\,dy
\\
=\frac{1}{|B(x, r)|}\int_{\Phi(B(x, r))}\bigl(f(y)-f_{B}(\Phi(x), 2r)\bigr)\,dy+f_{B}(\Phi(x), 2r)\frac{|\Phi(B(x, r))|}{|B(x, r)|},
\end{multline*}
where
\begin{multline*}
\frac{1}{|B(x, r)|}\biggl|\int_{\Phi(B(x, r))}\bigl(f(y)-f_{B}(\Phi(x), 2r)\bigr)\,dy\biggr|
\\
\le\frac{2^d}{|B(\Phi(x), 2r)|}\int_{B(\Phi(x), 2r)}\bigl|f(y)-f_{B}(\Phi(x), 2r)\bigr|\,dy\le 2^d w(2r).
\end{multline*}
In addition, for some constant $N_2>0$ we have the estimate
$$
\Bigl|1-\frac{|\Phi(B(x, r))|}{|{\rm det}\,\Phi'(x)|\,|B(x, r)|}\Bigr|\le N_2r^{\gamma}.
$$
Thus,
$$
\bigl||{\rm det}\,\Phi'(\Phi^{-1}(y))|^{-1}g_{B}(x, r)-f_{B}(\Phi(x), 2r)\bigr|
\le (N_1+N_2)r^{\gamma}+2^dw(2r).
$$
Therefore, we arrive at the inequality
$$
\frac{1}{|B(x, r)|}\int_{B(x, r)} |g(v)-g_{B}(x, r)|\,dv\le N_3(w(2r)+r^{\gamma}),
$$
which shows that $g$ satisfies the Dini mean oscillation condition.

Let us prove assertion (ii). Let $c_d$ denote the volume of the unit ball in $\mathbb{R}^d$.

Let $(x, x_{d+1})\in B((z, z_{d+1}), R)$ and $0<r<R$.
We observe that by Fubini's theorem
\begin{multline*}
F_{B}((x, x_{d+1}), r)=\frac{2}{c_{d+1}}\int_{B(0, 1)}f(x_1+ry_1,\ldots, x_d+ry_d)\sqrt{1-y_1^2-\ldots-y_d^2}\,dy_1\ldots dy_{d}
\\
=\frac{2}{c_{d+1}}\int_{B(0, 1)}\Bigl(f(x_1+ry_1,\ldots, x_d+ry_d)-f_B(x, r)\Bigr)\sqrt{1-y_1^2-\ldots-y_d^2}\,dy_1\ldots dy_{d}
+f_B(x, r),
\end{multline*}
where the first term is estimated in absolute value by $Nw(r)$ with some constant $N>0$.
We have
\begin{multline*}
\frac{1}{|B((x, x_{d+1}), r)|}\int_{B((x, x_{d+1}), r)} |F(v)-F_{B}((x, x_{d+1}), r)|\,dv
\\
\le
Nw(r)+\frac{1}{|B((x, x_{d+1}), r)|}\int_{B((x, x_{d+1}), r)} |F(v)-f_{B}(x, r)|\,dv.
\end{multline*}
Applying Fubini's theorem to the second term, we estimate it by
$$
\frac{2r}{|B((x, x_{d+1}), r)|}\int_{B(x, r)} |f(y)-f_{B}(x, r) |\,dy.
$$
Thus, for some constant $C(d, N)>0$ we obtain
$$
\frac{1}{|B((x, x_{d+1}), r)|}\int_{B((x, x_{d+1}), r)} |F(v)-F_{B}((x, x_{d+1}), r)|\,dv\le
C(d, N)w(r),
$$
which implies our claim.
\end{proof}

Our next result generalizes Example \ref{prim1} and Theorem \ref{th2} to the case where the  diffusion matrix
satisfies the Dini mean oscillation condition.

\begin{theorem}\label{th12}
Suppose that condition $\rm\bf H_a$ is fulfilled, on every ball
the matrix $A$  satisfies the Dini mean oscillation condition with some function $\omega$,
and $b^i, c\in L^{d+}_{loc}(\Omega)$. Suppose also that
$\varrho\in L^1_{loc}(\Omega)$ is a solution to the equation
$$
\partial_{x_i}\partial_{x_j}\bigl(a^{ij}\varrho\bigr)-\partial_{x_i}\bigl(b^i\varrho\bigr)+c\varrho=0.
$$
Then the function $\varrho$ has a continuous version. Moreover, if $\varrho\ge 0$,
then the continuous version of $\varrho$ satisfies the Harnack inequality, i.e., for every ball
$B(x_0, R/2)\subset B(x_0, 4R)\subset\Omega$
there exists a  number $C$ such that
$$
\sup_{x\in B(x_0, R/2)}\varrho(x)\le C\inf_{x\in B(x_0, R/2)}\varrho(x),
$$
where $C$ depends on $R$, $w$, $d$, $\nu$, $p$, $\|c\|_{L^p(B(x_0, 4R))}$,
and $\|b\|_{L^p(B(x_0, 4R))}$, and does not depend on~$\varrho$.  The modulus of continuity
of $\varrho$ on $B(x_0, R/2)$  depends on the same objects.
\end{theorem}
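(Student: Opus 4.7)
The plan is to reduce the stated hypotheses on $(A,b,c)$ to the case of a pure double divergence equation $\partial_I\partial_J(\widetilde{a}^{IJ}\widetilde{\sigma})=0$ with a Dini mean oscillation diffusion matrix, to which \cite[Lemma 4.2]{DongEscKim} directly applies. This reduction is performed in three steps: first Zvonkin's transform, then the two tricks of Remarks \ref{remarkpot} and \ref{remarkkruj}.

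Fix $B(x_0,4R)\subset\Omega$ and let $\Phi=I+u$ be the Zvonkin diffeomorphism constructed before Proposition \ref{prop2}, with $\lambda$ chosen as in Proposition \ref{prop1}; then $\Phi\in C^{1,\gamma}$ with $\gamma=1-d/p$. As in Remark \ref{remarkpot}, the push-forward $\sigma=|\det\Psi'|\varrho\circ\Psi$ satisfies on $B(y_0,2R)$ an equation $\mathcal{L}^*\sigma+g\sigma=0$ where $Q=(q^{km})$ is uniformly elliptic, the drift $h^k=\lambda u^k\circ\Psi$ lies in $C^{1,\gamma}$, and $g=c\circ\Psi\in L^p$. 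The matrix $Q$ still satisfies the Dini mean oscillation condition: Lemma \ref{Dinisave}(i) yields it for $A\circ\Psi$, and \cite[Lemma 2.1]{DongEscKim} shows that multiplication by the H\"older continuous factors $\partial_{x_i}\Phi^k\circ\Psi$ preserves it, since H\"older continuity implies the classical Dini condition.

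Next, following Remark \ref{remarkpot}, solve the Dirichlet problem $\mathcal{L}w+(g-\gamma)w=0$ on $B(y_0,R)$ with boundary value $1$, taking $\gamma$ so large that the contraction and maximum principle arguments of that remark produce a strictly positive $w\in W^{2,p}\cap C^{1,\gamma}$. Multiplying the equation for $\sigma$ by $w$ yields $\widetilde{\mathcal{L}}^*(w\sigma)+\gamma(w\sigma)=0$ with modified drift $\widetilde{h}=h+2w^{-1}Q\nabla w$, which is again Dini mean oscillation as a product of H\"older continuous factors $w^{-1}$, $\nabla w$ and the Dini mean oscillation matrix $Q$. Apply the first trick of Remark \ref{remarkkruj} in one extra variable to kill the constant zero-order term $\gamma$: regarded as independent of $y_{d+1}$, the function $w\sigma$ solves a double divergence equation on $B(y_0,R)\times(0,1)$ with matrix $\mathrm{diag}(Q,1)$, drift $(\widetilde{h},-\gamma y_{d+1})$, and no potential; by Lemma \ref{Dinisave}(ii) the new matrix still satisfies Dini mean oscillation. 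Finally, apply the second (drift-killing) trick of Remark \ref{remarkkruj} in a further extra variable: for $M$ large enough the resulting $(d+2)\times(d+2)$ matrix is uniformly elliptic and, again by Lemma \ref{Dinisave}(ii) and \cite[Lemma 2.1]{DongEscKim}, of Dini mean oscillation class, so that $w\sigma$ (trivially extended) solves a pure double divergence equation on a neighborhood in $\mathbb{R}^{d+2}$.

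By \cite[Lemma 4.2]{DongEscKim} this nonnegative solution admits a continuous version satisfying the Harnack inequality on concentric balls. Restricting to the $d$-dimensional slice, dividing by the strictly positive continuous $w$ (bounded above and below by interior estimates), and pulling back by the bi-Lipschitz $\Phi$ of Proposition \ref{prop2}(ii) transfer both the continuity and the Harnack inequality to $\varrho$ on $B(x_0,R/2)$; the quantitative dependence on $R$, $\omega$, $d$, $\nu$, $p$, $\|b\|_{L^p(B(x_0,4R))}$, $\|c\|_{L^p(B(x_0,4R))}$ is obtained by tracking each step. I expect the main technical obstacle to be the Dini mean oscillation regularity of the modified drift $\widetilde{h}$: only this quantitative property, rather than mere continuity, allows the final Krjukov absorption to feed the resulting matrix into \cite[Lemma 4.2]{DongEscKim}. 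This forces one to control mean oscillations of products in which the H\"older exponents of $u$ and $w$ come from the embedding $W^{2,p}\hookrightarrow C^{1,1-d/p}$ and so depend only on $d$ and $p$.
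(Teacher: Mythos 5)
Your proposal follows essentially the same route as the paper's proof: Zvonkin's transform to smooth the drift, the renormalization of Remark \ref{remarkpot} to replace the potential $c$ by a constant, the two dimension-raising transforms of Remark \ref{remarkkruj} to kill that constant and the new H\"older drift, Lemma \ref{Dinisave} and \cite[Lemma 2.1]{DongEscKim} to propagate the Dini mean oscillation condition through every change of variables and product, and finally \cite[Lemma 4.2]{DongEscKim} for the Harnack inequality of the transformed pure double divergence form equation. The only small point to add is that for the continuity statement (without assuming $\varrho\ge 0$) the paper additionally invokes \cite[Theorem 1.10]{DongKim}, whereas you only cite \cite[Lemma 4.2]{DongEscKim}; this is a citation detail and does not affect the soundness of the argument.
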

\begin{proof}
Justification is the same as in Example \ref{prim1} and Theorem \ref{th2}, but in place of results
from \cite{Sjogan75} and \cite{BSH17} we apply \cite[Theorem 1.10]{DongKim} and \cite[Lemma 4.2]{DongEscKim},
because Zvonkin's transform combined with Remark \ref{remarkpot}
and the transformation from Remark \ref{remarkkruj} enable us to reduce the proof to the
case of $b=0$ and $c=0$, moreover, the elements of the new matrix $A$ satisfy the Dini mean oscillation condition by Lemma \ref{Dinisave}.
\end{proof}

We now discuss the integrability of solutions without the assumption about   Dini's condition.
Let us recall that, according to \cite[Theorem 2.1]{BSH17}, if $a^{ij}$ and $b^i$
satisfy conditions $\rm\bf H_a$ and $\rm\bf H_b$ and $a^{ij}\in VMO$, then
every solution $\varrho\in L^1_{loc}(\Omega)$ is locally integrable to every power $p\ge 1$.
If the functions $a^{ij}$ satisfy the Dini mean oscillation condition, then the solution is locally bounded and even continuous.
It is of interest to study integrability when $A$ is slightly better than $VMO$, but does not satisfy  Dini's condition.

Suppose that the coefficients $a^{ij}$ and $b^i$ satisfy conditions $\rm\bf H_a$ and $\rm\bf H_b$ and that
$$
\|A(x)-A(y)\|\le\omega(\|x-y\|),
$$
where $\omega$ is an increasing continuous function on $[0, +\infty)$ and $\omega(0)=0$.
We also assume that for some $C_{\omega}>0$ and all $t\ge 0$ the inequality
$$\omega(t)\ge C_{\omega}t^{1-d/p}$$
holds. Set
$$
\Lambda(t)=-\int_{\tfrac{1}{t}}^1\ln(\omega^{-1}(s))\,ds.
$$
Integrating by parts, it is readily verified that
$$
\Lambda(\tfrac{1}{t})=\int_{\omega^{-1}(t)}^{\omega^{-1}(1)}\frac{\omega(s)}{s}\,ds-\ln\omega^{-1}(1)+t\ln\omega^{-1}(t).
$$
We observe that if   Dini's condition is fulfilled, then the function $\Lambda(t)$ is bounded from above.

\begin{theorem}\label{th4}
Let $\varrho\in L^1_{loc}(\Omega)$ satisfy equation {\rm (\ref{eq1})}.
Then, for every closed ball $B\subset\Omega$ and every $\delta>0$, there
exists a constant $c=c(B, \delta)>0$ such that for all $q\ge 1$ one has
$$
\ln\|\varrho\|_{L^p(B)}\le c+c\Lambda(cq)+\delta\ln q.
$$
\end{theorem}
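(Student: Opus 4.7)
The plan is to combine Zvonkin's transform with a duality argument based on a refined $L^{q'}$-Calder\'on--Zygmund estimate for an auxiliary Dirichlet problem whose constant tracks the modulus of continuity $\omega$ through the function $\Lambda$. The key point is that the logarithm of this $L^{q'}$-constant should be bounded by $c+c\Lambda(cq)+\delta\ln q$.

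First, I would reduce to an equation with a bounded $C^{1}$-drift via Zvonkin's transform. Fix a closed ball $B\subset\Omega$ and choose $R>0$ with $B\subset B(x_0, R/2)\subset B(x_0, 4R)\subset\Omega$. Applying Propositions~\ref{prop1}--\ref{prop3}, $\sigma(y)=\varrho(\Psi(y))|\det\Psi'(y)|$ satisfies $\mathcal{L}^{*}\sigma=0$ on $B(y_0, 2R)$ with drift $h$ bounded and continuously differentiable and diffusion matrix $Q$ still uniformly elliptic. Since $\partial\Phi$ is H\"older of order $\gamma=1-d/p$, the modulus of continuity of $Q$ is bounded by $C(\omega(2r)+r^{\gamma})$; but the hypothesis $\omega(t)\ge C_{\omega}t^{1-d/p}$ gives $r^{\gamma}\lesssim\omega(r)$, so the modulus of $Q$ and the corresponding function $\Lambda_Q$ are controlled by $\omega$ and $\Lambda$ up to additive and multiplicative constants. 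Because $\tfrac{1}{2}\le|\det\Psi'|\le 2$, it suffices to bound $\|\sigma\|_{L^q}$ on $B(y_0, R/2)$.

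Next, I would estimate $\|\sigma\|_{L^q}$ by duality. For $\varphi\in C_0^{\infty}(B(y_0, R/2))$ with $\|\varphi\|_{L^{q'}}\le 1$, solve the Dirichlet problem $\mathcal{L}u-\mu u=-\varphi$ on $B(y_0, R)$ with $u=0$ on $\partial B(y_0, R)$ and $\mu$ sufficiently large; existence of $u\in W^{2, r}(B(y_0, R))\cap W^{1, r}_{0}(B(y_0, R))$ for any $r>1$ follows by continuation in the parameter, as in Remark~\ref{remarkpot}. Substituting $u$ (extended by zero) into $\mathcal{L}^{*}\sigma=0$ yields $\int\sigma\varphi\,dy=\mu\int\sigma u\,dy$, hence
$$\Bigl|\int\sigma\varphi\,dy\Bigr|\le\mu\|u\|_{L^{\infty}(B(y_0, R))}\|\sigma\|_{L^{1}(B(y_0, R))},$$
and the $L^{1}$-norm of $\sigma$ is bounded by $2\|\varrho\|_{L^{1}(B(x_0, 4R))}$ after the change of variables. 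Taking the supremum over $\varphi$ reduces the problem to establishing $\ln\|u\|_{L^{\infty}}\le c+c\Lambda(cq)+\delta\ln q$ whenever $\|\varphi\|_{L^{q'}}\le 1$.

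The main obstacle is this quantitative $L^{q'}$-estimate, which has to be obtained by tracking the dependence on the exponent and on the VMO-modulus in Krylov's $L^{p}$-theory (\cite{Kr2007}, \cite{Krbook}). Morally, freezing coefficients at the dyadic scale $r_k\sim\omega^{-1}(2^{-k})$ produces an error bounded by $\omega(r_k)$, and summing these errors over the scales $1/q\le s\le 1$ yields precisely $\Lambda(cq)$. When $q'\le d/2$, so that the direct Sobolev embedding $W^{2, q'}\hookrightarrow L^{\infty}$ fails, one either iterates the estimate along a geometric sequence of exponents (absorbing the cumulative loss into $\delta\ln q$) or interpolates via Gagliardo--Nirenberg between the Krylov $L^{d}$-maximum principle and the Calder\'on--Zygmund $L^{p}$-estimate. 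This quantitative $L^{p}$-bound with explicit $\Lambda$-dependence is the new analytic ingredient of the theorem; it corrects the gap in \cite{BSH17} flagged in the introduction and is where the hypothesis $\omega(t)\ge C_{\omega}t^{1-d/p}$ enters. Inverting the change of variables then gives the stated bound for $\varrho$.
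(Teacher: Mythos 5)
Your reduction via Zvonkin's transform to a bounded $C^1$ drift, and the observation that the hypothesis $\omega(t)\ge C_{\omega}t^{1-d/p}$ guarantees that the transformed matrix $Q$ has a modulus controlled by $\omega$, is exactly what the paper does. But after that the two proofs diverge, and yours has two genuine gaps.

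First, a technical one. In your duality step you solve $\mathcal{L}u-\mu u=-\varphi$ on $B(y_0,R)$ with $u=0$ on $\partial B(y_0,R)$, extend $u$ by zero, and plug it into the weak formulation of $\mathcal{L}^{*}\sigma=0$ to get $\int\sigma\varphi=\mu\int\sigma u$. That substitution is not legitimate: the weak formulation is stated for test functions in $W^{2,p}$ with compact support inside the domain, and the zero extension of the Dirichlet solution has a jump in the normal derivative across $\partial B(y_0,R)$, so it is not in $W^{2,p}$. One needs a cutoff $\zeta$ and must test with $\zeta u$, which produces lower-order commutator terms $u\,\mathrm{tr}(QD^2\zeta)+2\langle Q\nabla u,\nabla\zeta\rangle+\langle h,\nabla(\zeta u)\rangle$ that have to be controlled separately. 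This is exactly how the paper handles it; your version silently drops those terms.

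Second, and more seriously, the claimed quantitative $L^{q'}$-estimate for the variable-coefficient Dirichlet problem — with $\ln\|u\|_{L^\infty}\le c+c\Lambda(cq)+\delta\ln q$ — is essentially the whole theorem, and you do not prove it. The sentence about freezing coefficients at dyadic scales $r_k\sim\omega^{-1}(2^{-k})$ and summing $\omega(r_k)$ to get $\Lambda(cq)$ is a heuristic, not an argument; the standard VMO Calder\'on--Zygmund theory (Krylov) gives the qualitative solvability you use for existence but does not track this $q$-dependence, and that tracking is precisely the new content here. The paper avoids this by freezing coefficients at a \emph{single} point $x_0$ and a \emph{single} scale $\lambda$ chosen so that $C_1 q\,\omega(\lambda)=1/2$, using the explicit constant-coefficient bound $\|D^2u\|_{L^t}\le C\,t'\|f\|_{L^t}$ from Lemma~\ref{lemint} (where the $t'$-growth comes from classical singular integral theory), and then iterating over a geometric sequence of exponents $q_m=2C_1\beta^m$ on a shrinking sequence of balls $r_m=2-\sum_{k\le m}\lambda_k$. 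The sum $\sum_m\beta^{-m}\ln\lambda_m$ is what produces $\Lambda$. This one-scale-at-a-time iteration is the key idea you are missing: it replaces the (unavailable) dyadic multiscale VMO estimate with a sequence of elementary single-scale estimates. Without it, your proof does not close.
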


We first consider the case of a bounded drift coefficient  $b$ and by  Zvonkin's transform extend the
result to the case of integrable $b$.

The next auxiliary assertion is standard, but in order to precise depends of constants on parameters
we include a justification.

\begin{lemma}\label{lemint}
Let $\alpha=(\alpha^{ij})$ be a constant  symmetric positive definite matrix such that
$$\gamma I\le \alpha\le \gamma^{-1}I.$$
Let $f\in C^{\infty}_0(B(0, 1))$ and
$$1<r<d/(d-1), \quad  1<t<\tau<d/(d-1).$$
Then there exists a function $u\in C^{\infty}(B(0, 1))$  such that
${\rm tr}(\alpha D^2u)=f$ and
\begin{equation}\label{est1}
\|u\|_{L^r(B(0, 1))}+\|\nabla u\|_{L^r(B(0, 1))}\le C(r, d, \gamma)\|f\|_{L^t(B(0, 1))},
\end{equation}
\begin{equation}\label{est2}
\|D^2u\|_{L^t(B(0, 1))}\le C(d, \tau, \gamma)t'\|f\|_{L^t(B(0, 1))},
\end{equation}
where the constants $C(d, \tau, \gamma)$ and $C(r, d, \gamma)$ do not depend on $t$.
\end{lemma}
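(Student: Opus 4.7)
The plan is to reduce to the Laplacian by a linear change of variables, build $u$ as the Newtonian potential of $f$, and then derive both estimates from classical convolution-kernel arguments while carefully tracking the dependence of constants on the exponents.

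First I would set $T=\alpha^{1/2}$ and $\tilde u(y)=u(T^{-1}y)$, $\tilde f(y)=f(T^{-1}y)$. The equation ${\rm tr}(\alpha D^2u)=f$ becomes $\Delta\tilde u=\tilde f$ with $\tilde f\in C_0^\infty(T(B(0,1)))$ and $T(B(0,1))\subset B(0,\gamma^{-1/2})$. Since $\gamma^{1/2}I\le T\le\gamma^{-1/2}I$, this change of variables distorts all relevant $L^p$ norms only by factors depending on $\gamma$ and $d$, uniformly in $p$. It therefore suffices to construct $\tilde u\in C^\infty(\mathbb{R}^d)$ solving $\Delta\tilde u=\tilde f$ and to establish the two inequalities in the case $\alpha=I$; I would take $\tilde u=\Gamma\ast\tilde f$, the Newtonian potential (with $\Gamma(x)=c_d|x|^{2-d}$, or the logarithmic kernel if $d=2$), so that $\tilde u\in C^\infty$ and $\Delta\tilde u=\tilde f$ hold identically.

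For the estimate (\ref{est1}), $\tilde u$ and $\nabla\tilde u$ are convolutions of $\tilde f$ with kernels whose pointwise bounds are constant multiples of $|x|^{2-d}$ and $|x|^{1-d}$. Restricted to a ball of any fixed bounded radius, both kernels lie in $L^s$ for every $s<d/(d-1)$. Since $r<d/(d-1)$ and $t>1$, a short arithmetic check shows that $s$ can be chosen in $(1,d/(d-1))$ with $1+1/r=1/s+1/t$ whenever $t\le r$; when $t>r$ one first applies H\"older's inequality on the bounded support to pass from $\|\tilde f\|_{L^t}$ to $\|\tilde f\|_{L^r}$, and then uses Young with $s=1$. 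In each case Young's inequality yields (\ref{est1}) with a constant depending only on $r$, $d$ and $\gamma$.

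For (\ref{est2}), the second derivatives satisfy $D^2\tilde u=K\ast\tilde f$ modulo a pointwise multiple of $\tilde f$ itself, where $K={\rm pv}\,D^2\Gamma$ is a convolution Calder\'on--Zygmund kernel. The classical Calder\'on--Zygmund theorem gives $\|D^2\tilde u\|_{L^t}\le C_{d,t}\|\tilde f\|_{L^t}$, and the quantitative heart of the lemma is to show $C_{d,t}\le C(d)(t-1)^{-1}$ as $t\downarrow 1$. This comes out of Marcinkiewicz interpolation between the weak-$(1,1)$ bound for $K\ast$ and its strong $L^2$ bound, both with constants depending only on $d$. Since $1<t<\tau<d/(d-1)<2$, one has $(t-1)^{-1}=t'/t\le t'$, and the interpolation constant remains uniformly bounded over $t\in(1,\tau)$ once $\tau$ is fixed; this yields $\|D^2\tilde u\|_{L^t}\le C(d,\tau,\gamma)\,t'\|\tilde f\|_{L^t}$ and explains the appearance of $\tau$ in the constant. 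The main technical delicacy is precisely this step: one must verify that the $L^t$ operator norm of a convolution Calder\'on--Zygmund kernel is genuinely $O(t')$ (and not, say, $O(t'\log t')$) as $t\downarrow 1$, which is a standard but non-routine consequence of the sharp form of the Marcinkiewicz interpolation theorem.
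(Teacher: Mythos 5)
Your proof is correct in substance, and it takes a genuinely different route from the paper's. The paper solves the Dirichlet problem $\Delta u=f$ on the larger ball $B(0,2)$ with zero boundary data, obtains the $L^r$ bounds on $u$ and $\nabla u$ by a duality argument (testing against a second Dirichlet solution $v$ with $\Delta v=\operatorname{div}g$ and using an $L^\infty$ bound for $v$), and then applies the Gilbarg--Trudinger $L^p$ estimates (Theorems 9.8--9.9) to the cut-off function $\zeta u$ to get the second-derivative bound with the explicit factor $t'$. You instead take $u$ to be the Newtonian potential of $f$, prove (\ref{est1}) by Young's inequality on a bounded domain for the kernels $|x|^{2-d}$ and $|x|^{1-d}$, and prove (\ref{est2}) by Marcinkiewicz interpolation between the weak-$(1,1)$ and $L^2$ bounds for the Calder\'on--Zygmund operator $f\mapsto\mathrm{pv}\,D^2\Gamma* f$. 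Both are standard and both land on the same $t'$ factor (your Marcinkiewicz constant is of order $(t')^{1/t}\le t'$, which suffices); your version avoids the boundary regularity bookkeeping but requires the quantitative form of Marcinkiewicz, whereas the paper outsources that to the cited GT theorems and needs the cutoff to localize. Your Young-inequality bookkeeping for (\ref{est1}) is sound: for $t\le r$ the exponent $s$ defined by $1+1/r=1/s+1/t$ ranges over $[1,r]\subset[1,d/(d-1))$ and the kernel norms stay uniformly bounded, while for $t>r$ the H\"older factor from the bounded domain is uniform over $t<d/(d-1)$; in both cases the resulting constant depends only on $r$, $d$, $\gamma$, as required.

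One small slip to fix: with $T=\alpha^{1/2}$ the substitution that turns $\operatorname{tr}(\alpha D^2u)=f$ into the Laplace equation is $\tilde u(y)=u(Ty)$, since then $D^2_y\tilde u=T\,D^2u(Ty)\,T$ and $\Delta\tilde u=\operatorname{tr}(T^2D^2u)=\operatorname{tr}(\alpha D^2u)$. As written, $\tilde u(y)=u(T^{-1}y)$ gives $\Delta\tilde u=\operatorname{tr}(\alpha^{-1}D^2u)$ instead. Consequently $\tilde f=f(T\cdot)$ is supported in $T^{-1}(B(0,1))\subset B(0,\gamma^{-1/2})$; the containment you need is unchanged, so the rest of the argument goes through verbatim after this correction.
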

\begin{proof}
Changing coordinates we reduce the problem to the case of the unit matrix $\alpha$.
Let $u$ be the  solution to the Dirichlet problem $\Delta u=f$ on $B(0, 2)$, $u=0$ on $\partial B(0, 2)$.
We estimate $\nabla u$. Let $g^j\in C_0^{\infty}(B(0, 2))$
and let $v$ be the solution to the Dirichlet problem $\Delta v={\rm div}\, g$ on $B(0, 2)$, $v=0$ on $\partial B(0, 2)$.
By Theorem \cite{book} and the  embedding theorem one has
$$
\sup_{B(0, 2)}|v(x)|\le C_1(r, d)\|g\|_{L^{r'}(B(0, 2))}.
$$
Then
$$
\int\langle \nabla u, g\rangle\,dx=-\int v\Delta u\,dx\le C_1(r, d)\|g\|_{L^{r'}(B(0, 2))}\|f\|_{L^1(B(0, 2))}.
$$
Since $g$ was arbitrary, we obtain the desired estimate of the norm $\|\nabla u\|_{L^r(B(0, 2))}$.
Similarly an estimate of the norm $\|u\|_{L^r(B(0, 2))}$ is obtained.

Let now $\zeta\in C_0^{\infty}(B(0, 2))$ and $\zeta=1$ on $B(0, 1)$.
Applying to  $\zeta u$ Theorems 9.8 and 9.9 from \cite{GT}, we obtain the inequality
$$
\|D^2(\zeta u)\|_{L^t(B(0, 2))}\le C_1(d, \tau)t'\|\Delta(\zeta u)\|_{L^t(B(0, 2))},
$$
where the right-hand side is estimated by
$$
C_2(d, \tau)t'\bigl(\|f\|_{L^t(B(0, 2))}+\|\nabla u\|_{L^{\tau}(B(0, 2))}+\|u\|_{L^{\tau}(B(0, 2))}\bigr).
$$
It remains to apply the   estimate for $u$ and $\nabla u$ obtained above.
\end{proof}

\begin{lemma}\label{lemint2}
Let $\varrho$ be a solution to equation {\rm (\ref{eq1})} with a bounded  drift coefficient.
Then, for every ball $B$ and every $\delta>0$, there exists  $c=c(B, \delta)>0$ such
that for all $q\ge 1$ we have
$$
\ln\|\varrho\|_{L^q(B)}\le c+c\Lambda(cq)+\delta\ln q.
$$
\end{lemma}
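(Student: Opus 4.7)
The plan is a duality argument combined with a freezing of the diffusion matrix $A$ on small balls. By duality,
$$\|\varrho\|_{L^q(B)}=\sup\Bigl\{\int \varrho f\,dx\colon f\in C_0^{\infty}(B),\ \|f\|_{L^{q'}}\le 1\Bigr\},$$
so it suffices to bound this pairing uniformly in such $f$. Fix a scale $r=r(q)$ to be chosen later, cover an enlargement of $\overline B$ by balls $B(x_i,r)$ of uniformly bounded overlap, and let $\{\psi_i\}$ be a subordinate smooth partition of unity. Writing $f_i=f\psi_i$ and $\alpha_i=A(x_i)$, Lemma~\ref{lemint} (translated and rescaled from $B(0,1)$ to $B(x_i,2r)$) supplies $\varphi_i$ solving ${\rm tr}(\alpha_i D^2\varphi_i)=f_i$ in $B(x_i,2r)$ with zero Dirichlet data and the scaling-invariant bounds
$$\|D^2\varphi_i\|_{L^{q'}}\le Cq\|f_i\|_{L^{q'}},\ \ \|\nabla\varphi_i\|_{L^{q'}}\le Cr\|f_i\|_{L^{q'}},\ \ \|\varphi_i\|_{L^{q'}}\le Cr^2\|f_i\|_{L^{q'}}.$$
The decisive factor $q$ comes from $t'=q$ when $t=q'$. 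The requirement $q'<d/(d-1)$ in Lemma~\ref{lemint} restricts us to $q>d$; small $q$ are handled by H\"older against the a priori $L^{p}_{loc}$-bound from \cite[Theorem 2.1]{BSH17}.

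Because $\varphi_i$ lies in $W^{2,q'}\cap W^{1,q'}_0$ but not in $W^{2,q'}_0$, extension by zero is inadmissible as a test function. I would therefore introduce a cutoff $\eta_i\in C_0^{\infty}(B(x_i,2r))$ with $\eta_i\equiv 1$ on ${\rm supp}\,\psi_i$ and insert $\eta_i\varphi_i$ into the weak formulation $\int L(\eta_i\varphi_i)\varrho\,dx=0$. Expanding via $L\varphi_i=f_i+{\rm tr}((A-\alpha_i)D^2\varphi_i)+\langle b,\nabla\varphi_i\rangle$ and the product rule for $L(\eta_i\varphi_i)$, controlling the cutoff commutators $\varphi_i L\eta_i$ and $A\nabla\eta_i\cdot\nabla\varphi_i$ via $|\nabla\eta_i|\lesssim r^{-1}$, $|D^2\eta_i|\lesssim r^{-2}$ together with the three bounds above, and applying H\"older $(q,q')$ with $\|A-\alpha_i\|_{L^\infty(B(x_i,2r))}\le\omega(2r)$ and boundedness of $b$, I would arrive at
$$\Bigl|\int \varrho f_i\,dx\Bigr|\le C(q\omega(2r)+\|b\|_\infty r+1)\|\varrho\|_{L^q(B(x_i,2r))}\|f_i\|_{L^{q'}}.$$
Summing with bounded overlap and $\sum\psi_i^{q'}\le 1$ yields the one-step recursion
$$\|\varrho\|_{L^q(B)}\le K(1+q\omega(2r))\|\varrho\|_{L^q(B^r)},\qquad K=K(d,m,\|b\|_\infty,R),$$
where $B^r$ denotes the $r$-neighborhood of $B$.

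To extract the claimed bound, I would iterate this recursion along nested balls $B=B_0\subset B_1\subset\cdots\subset B_N\subset\widetilde B\Subset\Omega$ with a geometric schedule $r_k=\tfrac12\omega^{-1}(\varepsilon_0\lambda^k/q)$ for small $\varepsilon_0$ and $\lambda\in(0,1)$, so that $\sum r_k\le R$. After taking logarithms, the accumulated factor $\sum_k\ln(1+q\omega(2r_k))$, via the change of variable $s=\omega(2r_k)$, can be identified with $c\Lambda(cq)$ up to a bounded correction---matching the representation $\Lambda(1/t)=\int_{\omega^{-1}(t)}^{\omega^{-1}(1)}\omega(s)/s\,ds+$ lower order given right before the statement. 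The residual count $N\ln K$ produces the $\delta\ln q$ term once $N$ is chosen of order $\ln q/\ln K$, and the base-case bound $\|\varrho\|_{L^q(\widetilde B)}$ is supplied by the a priori $L^{p}_{loc}$ estimate of \cite[Theorem 2.1]{BSH17} for $A\in VMO$.

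The main obstacle is this iteration bookkeeping: calibrating $\{r_k\}$ so that the telescoped sum matches $\Lambda(cq)$ exactly, while keeping $\|\varrho\|_{L^q(\widetilde B)}$ under control without spoiling the announced $\delta\ln q$ loss. Once the schedule is tuned, everything else reduces to H\"older, scaling, and Lemma~\ref{lemint}.
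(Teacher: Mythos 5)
Your high-level strategy matches the paper's: freeze $A$ at a point, invoke Lemma~\ref{lemint}, multiply by a cutoff, test the weak formulation, and iterate. But there is a genuine gap in the one-step recursion that makes the iteration circular, and it traces back to which norm you use for the lower-order quantities coming from the cutoff.

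You place all three quantities $\varphi_i,\ \nabla\varphi_i,\ D^2\varphi_i$ in $L^{q'}$ and apply H\"older $(q,q')$ throughout, so every piece of the integral --- including the cutoff commutators $\varphi_i L\eta_i$, $\langle A\nabla\eta_i,\nabla\varphi_i\rangle$ and $\langle b,\nabla(\eta_i\varphi_i)\rangle$ --- is dualized against $\|\varrho\|_{L^q}$. The resulting recursion
$\|\varrho\|_{L^q(B)}\le K(1+q\omega(2r))\|\varrho\|_{L^q(B^r)}$
has the \emph{same} exponent $q$ on both sides, and iterating it only yields
$\|\varrho\|_{L^q(B)}\lesssim \|\varrho\|_{L^q(\widetilde B)}$,
i.e.\ it compares the quantity to itself on a slightly larger ball. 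To close it you would need an a priori bound on $\|\varrho\|_{L^q(\widetilde B)}$ with the very $q$-dependence the lemma is trying to establish; \cite[Theorem 2.1]{BSH17} gives $\varrho\in L^q_{loc}$ for each $q$ but does not track the constant in $q$, so it cannot serve as the base case here. Separately, the bound $\|\nabla\varphi_i\|_{L^{q'}}\le Cr\|f_i\|_{L^{q'}}$ does not follow from Lemma~\ref{lemint}: estimate (\ref{est1}) there is for $u,\nabla u$ in $L^r$ with a \emph{fixed} $r\in(1,d/(d-1))$, not in $L^{q'}$, and trying to recover an $L^{q'}$ bound via Poincar\'e from (\ref{est2}) would cost an extra factor of $q$.

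The crux of the paper's argument is exactly the point you drop: the lower-order terms ($u$ and $\nabla u$) are kept in $L^r$ for a \emph{fixed} $r<d/(d-1)$ via (\ref{est1}), so the cutoff terms are dualized against $\|\sigma\|_{L^{r'}(B(0,1))}$ with $r'$ fixed and independent of $q$; only the term involving $D^2u$ (which carries the factor $t'=q$ from (\ref{est2})) is dualized against $\|\zeta\sigma\|_{L^q}$ and then absorbed by choosing $\lambda$ with $q\,\omega(\lambda)\sim 1$. This yields a genuine \emph{reverse} H\"older inequality
$\|\sigma\|_{L^q(B(0,1/2))}\le C\,\|\sigma\|_{L^{s}(B(0,1))}$ for any $r'<s<q$,
which is then iterated along a geometric schedule $q_m=2C_1\beta^m$, $s=q_{m-1}$, so the integrability exponent actually increases at each step, starting from a fixed low exponent where finiteness is known and $C_5$ absorbs the base case. (When you sum over a covering after such a step-down in exponent, you also need the embedding $\ell^{s}\hookrightarrow\ell^{q}$, $s<q$, which your $\sum\psi_i^{q'}\le1$ bookkeeping does not provide when the left and right norms differ.) Without the drop from $L^q$ to a fixed $L^{r'}$ in the commutator terms, there is no bootstrap, and the iteration schedule --- however carefully tuned --- produces nothing.
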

\begin{proof}
It suffices to obtain our estimate for large $q$.
Without loss of generality we can assume that $B=B(0, 1)$ and $B(0, 4)\subset\Omega$.
Let $x_0\in B(0, 1)$ and $0<\lambda<1$ be fixed. The function $\sigma(y)=\varrho(x_0+\lambda y)$
on the ball $B(0, 2)$ satisfies the equation
$$
\partial_{y_iy_j}\bigl(q^{ij}\sigma\bigr)-\partial_{y_i}\bigl(h^i\sigma\bigr)=0,
$$
where $q^{ij}(y)=a^{ij}(x_0+\lambda y)$ and $h^i(y)=\lambda b^i(x_0+\lambda y)$.
Let
$$
Q_0=(q_0^{ij}), \ q_0^{ij}=a^{ij}(x_0),
\ \zeta\in C_0^{\infty}(B(0, 1)), \ 0\le \zeta\le 1, \  \zeta(y)=1
\ \hbox{ if \ } y\in B(0, 1/2).
$$
By Lemma \ref{lemint}, for every function $f\in C_0^{\infty}(B(0, 1))$,
there exists a smooth solution $u$ to the equation ${\rm tr}(Q_0D^2u)=f$
satisfying estimates (\ref{est1}) and (\ref{est2}). Then
$$
\int f\zeta\sigma\,dy=\int {\rm tr}((Q_0-Q)D^2u)\zeta\sigma\,dy
-\int \bigl[u{\rm tr}(QD^2\zeta)+2\langle Q\nabla u, \nabla\zeta\rangle+\langle h,\nabla(\zeta u)\rangle\bigr]\sigma\,dy.
$$
Let $1<t<\tau<d/(d-1)$ and $1<r<d/(d-1)$. Observe that
$$
\int {\rm tr}((Q_0-Q)D^2u)\zeta\sigma\,dy\le \omega(\lambda)\|D^2u\|_{L^{t}(B(0, 1))}\|\zeta\sigma\|_{L^{t'}(B(0, 1))}.
$$
By Lemma \ref{lemint} we have $\|D^2u\|_{L^{t}(B(0, 1))}\le C_1t'\|f\|_{L^{t}(B(0, 1))}$, where $C_1$ does not depend on $t$.
Therefore,
$$
\int {\rm tr}((Q_0-Q)D^2u)\zeta\sigma\,dy\le C_1t'\omega(\lambda)\|f\|_{L^{t}(B(0, 1))}\|\zeta\sigma\|_{L^{t'}(B(0, 1))}.
$$
The expression
\begin{equation}\label{eqint}
-\int \bigl[u{\rm tr}(QD^2\zeta)+2\langle Q\nabla u, \nabla\zeta\rangle+\langle h,\nabla(\zeta u)\rangle\bigr]\sigma\,dy
\end{equation}
is estimated from above by
$$
C(\zeta)\bigl[\sup_y\|Q(y)\|+\sup_y|h(y)|\bigr]\int_{B(0, 1)}\bigl(|u(y)|+|\nabla u(y)|\bigr)|\sigma(y)|\,dy.
$$
By H\"older's inequality we obtain
$$
\int_{B(0, 1)}\bigl(|u(y)|+|\nabla u(y)|\bigr)|\sigma(y)|\,dy\le
\bigl(\|u\|_{L^r(B(0, 1))}+\|\nabla u\|_{L^r(B(0, 1))}\bigr)\|\sigma\|_{L^{r'}(B(0, 1))}.
$$
Applying again Lemma \ref{lemint}, we can estimate  (\ref{eqint}) by
$$
C_2\|f\|_{L^{t}(B(0, 1))}\|\sigma\|_{L^{r'}(B(0, 1))},
$$
where $C_2$ does not depend on $t$. Thus, we arrive at the estimate
$$
\int f\zeta\sigma\,dy\le C_1t'\omega(\lambda)\|f\|_{L^{t}}\|\zeta\sigma\|_{L^{t'}}+
C_2\|f\|_{L^{t}}\|\sigma\|_{L^{r'}(B(0, 1))}.
$$
Set now $t'=q$. Let $C_1q\omega(\lambda)=1/2$,
i.e., $\lambda=\omega^{-1}\bigl(\tfrac{1}{2C_1q}\bigr)$, where the number $q$ is so large that $\lambda<1$.
Recall that $\zeta=1$ on $B(0, 1/2)$. Therefore, we have
$$
\|\sigma\|_{L^q(B(0, 1/2))}\le 2C_2\|\sigma\|_{L^{r'}(B(0, 1))}.
$$
Let $r'<s<q$. Applying H\"older's inequality, we arrive at the estimate
$$
\|\sigma\|_{L^q(B(0, 1/2))}\le C_3\|\sigma\|_{L^{s}(B(0, 1))},
$$
where $C_3$ does not depend on $q$, $\lambda$ and $s$. Returning to the original coordinates, we obtain
$$
\|\varrho\|_{L^q(B(x_0, \lambda/2))}\le \lambda^{\tfrac{d}{q}-\tfrac{d}{s}}C_3\|\varrho\|_{L^{s}(B(x_0, \lambda))}.
$$
Let $1<R<2$. Covering the ball $B(0, R)$ by finitely many balls $B(x_i, \lambda/2)$, we arrive at the estimate
$$
\|\varrho\|_{L^q(B(0, R))}\le C_4\lambda^{-d/s}\|\varrho\|_{L^{s}(B(0, R+\lambda))},
$$
where $C_4$ does not depend on $s$, $R$ and $\lambda$. Let
$$
\beta>1, \quad \frac{\ln C_4}{\ln \beta}<\delta.
$$
Set $q_m=2C_1\beta^m$, $s=2C_1\beta^{m-1}$ and
$$
\lambda_m=\omega^{-1}\bigl(\tfrac{1}{2C_1\beta^{m}}\bigr), \quad R=r_m=2-\sum_{k=k_0}^{m}\lambda_k.
$$
Since $\omega^{-1}(s)\le \bigl(\tfrac{s}{C}\bigr)^{p/(p-d)}$, the series $\sum_{k}\lambda_k$ converges.
Further we assume that $k_0$ is so large  that $r_m>1$ for all $m>k_0$ and $\beta^{-k_0+1}<1$.
Thus, for all $m>k_0$ the inequality
$$
\|\varrho\|_{L^{q_m}(B(0, r_m))}\le e^{v_m}\|\varrho\|_{L^{q_{m-1}}(B(0, r_{m-1}))}, \quad
v_m=\delta\ln\beta-d(2C_1)^{-1}\beta^{-m+1}\ln\lambda_m
$$
holds. By iterations from $k$ to $k_0$ we obtain the inequality
$$
\ln\|\varrho\|_{L^{q_k}(B(0, r_k))}\le C_5+\sum_{m=k_0}^{k}v_m,
$$
where $C_5$ does not depend on $k$. Observe that
$$
\sum_{m=k_0}^{k}v_k\le \delta\ln q_k-d(2C_1)^{-1}\sum_{m=k_0}^{k}\beta^{-m+1}\ln\lambda_m.
$$
We have the estimate
$$
-\sum_{m=k_0}^{k}\beta^{-m+1}\ln\omega^{-1}\bigl(\tfrac{1}{2C_1\beta^{m}}\bigr)\le
-\frac{\beta}{\beta-1}\int_{\beta^{-k}}^{\beta^{-k_0+1}}\ln\omega^{-1}\bigl(\tfrac{t}{2C_1}\bigr)\,dt.
$$
Thus,
$$
\ln\|\varrho\|_{L^{q_k}(B(0, 1))}\le C_5+\delta\ln q_k+C_6\Lambda(q_k).
$$
If  now $\beta^{-1}q_{k}\le q\le q_{k}$ and $c$ is greater than $C_5$, $C_6$ and $\beta$, we
obtain the resulting estimate $\ln\|\varrho\|_{L^{q}(B(0, 1))}\le c+c\Lambda(cq)+\delta\ln q$.
\end{proof}

We now prove Theorem \ref{th4}.

\begin{proof}
It suffices to show that given
$$B(x_0, R/2)\subset B(x_0, 4R)\subset\Omega$$
and $\delta>0$, there exists a constant $c>0$ for which
$$
\ln\|\varrho\|_{L^q(B(x_0, R/2))}\le c+c\Lambda(cq)+\delta\ln q.
$$
Let $\Phi$ be the diffeomorphism constructed before Proposition \ref{prop2}.
According to Proposition \ref{prop3}, the function
$$
\sigma(y)=\varrho(\Psi(y))|\det\Psi'(y)|
$$
  on the ball $B(y_0, 2R)$ centered at $y_0=\Phi(x_0)$ is a solution to an
equation whose coefficients satisfy the hypotheses of Lemma \ref{lemint2}, that is, the
matrix $(q^{mk})$ satisfies condition $\rm\bf H_a$ and the  drift coefficient $h$ is bounded.
In addition,  $\Psi$ is Lipschitz, the functions $\partial_{x_i}\Phi$ are H\"older of order $1-d/p$
and $\omega(t)\ge Ct^{1-d/p}$. So  there exists a number $N>0$ such that
$$
\|Q(y)-Q(z)\|\le N\omega(N\|y-z\|).
$$
Observe that
$$
\Lambda_N(\tfrac{1}{t})=-\int_t^1\ln N^{-1}\omega^{-1}(N^{-1}s)\,ds\le
N\Lambda(\tfrac{N}{t})+\ln N.
$$
Therefore, for any closed ball $\overline{B}(y_0, R)$ in $B(y_0, 2R)$
and every $\delta>0$ there exists  $c>0$ such that
$$
\ln \|\sigma\|_{L^q(B(y_0, R))}\le c+c\ln N+cN\Lambda(cNq)+\delta\ln q.
$$
Since $\Phi$ is a $C^1$-diffeomorphism, an analogous estimate holds for the original function~$\varrho$.
\end{proof}

\begin{corollary}
Under the hypotheses of Theorem {\rm \ref{th4}}, for every closed ball $B\subset\Omega$
the following assertions hold.

{\rm (i)} If $\lim\limits_{t\to 0+}\omega(t)|\ln t|=0$, then
$$
\exp(\gamma_1|\varrho|^{\gamma_2})\in L^1(B)\quad \hbox{for all $\gamma_1, \gamma_2>0$}.
$$

{\rm (ii)} If the function $\omega(t)|\ln t|$ is bounded on $(0, 1]$, then exist  numbers  $\gamma_1, \gamma_2>0$ such that
$$
\exp(\gamma_1|\varrho|^{\gamma_2})\in L^1(B).
$$

{\rm (iii)} If for some $0<\beta<1$ the function $\omega(t)|\ln t|^{\beta}$ is bounded on $(0, 1]$,
then for some $\gamma>0$
$$
\exp(\gamma\bigl|\ln(|\varrho|+1)\bigr|^{\tfrac{1}{1-\beta}})\in L^1(B).
$$
\end{corollary}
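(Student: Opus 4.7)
I plan to derive all three parts from Theorem~\ref{th4} by estimating $\Lambda(q)$ under each hypothesis on $\omega$ and then turning the resulting $L^q(B)$-bounds into pointwise exponential integrability. Using $\Lambda(q)=\int_{1/q}^{1}(-\ln\omega^{-1}(s))\,ds$ and inverting the given control on $\omega$, case (i) yields $-\ln\omega^{-1}(s)\le\varepsilon/s$ for $s<s_0(\varepsilon)$, hence $\Lambda(q)\le\varepsilon\ln q+O_\varepsilon(1)$, i.e., $\Lambda(q)=o(\ln q)$; case (ii) gives $\Lambda(q)\le C_0\ln q+O(1)$; and in case (iii), since $-\ln\omega^{-1}(s)\le(C_0/s)^{1/\beta}$ with $1/\beta>1$, we obtain $\Lambda(q)\le C_1 q^{1/\beta-1}+O(1)$. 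Substituting into Theorem~\ref{th4} with $\delta$ chosen small then yields, respectively: in (i), $\|\varrho\|_{L^q(B)}\le C_\mu q^\mu$ for any prescribed $\mu>0$; in (ii), $\|\varrho\|_{L^q(B)}\le Cq^{A}$ for some fixed $A>0$; and in (iii), $\int_{B}|\varrho|^q\,dx\le\exp(C_3 q^{1/\beta})$ for large $q$.

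For (i) and (ii) I would use the power-series expansion
$$\int_{B} e^{\gamma_1|\varrho|^{\gamma_2}}\,dx=\sum_{n\ge 0}\frac{\gamma_1^{n}}{n!}\int_{B}|\varrho|^{n\gamma_2}\,dx$$
together with Stirling, so the $n$-th summand is controlled by $K^{n} n^{n(\mu\gamma_2-1)}/\sqrt{n}$ in case (i) and $K^{n} n^{n(A\gamma_2-1)}/\sqrt{n}$ in case (ii). In (i) one may take $\mu<1/\gamma_2$ for any given $\gamma_1,\gamma_2>0$, which ensures summability; in (ii), fixing $\gamma_2<1/A$ yields summability for every $\gamma_1>0$, so some admissible pair exists. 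Case (iii) calls for a different route, since the $L^q$-growth is super-polynomial and the power series does not reach the sharp exponent $1/(1-\beta)$. There I would apply Chebyshev:
$$|\{x\in B:|\varrho(x)|>M\}|\le M^{-q}\int_{B}|\varrho|^{q}\,dx\le|B|\exp\bigl(C_3 q^{1/\beta}-q\ln M\bigr),$$
optimize over $q$ at $q=(\beta\ln M/C_3)^{\beta/(1-\beta)}$, and obtain the distributional bound $|\{|\varrho|>M\}\cap B|\le|B|\exp(-c_1(\ln M)^{1/(1-\beta)})$ with $c_1=(1-\beta)(\beta/C_3)^{\beta/(1-\beta)}>0$. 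The layer-cake formula with the substitution $M+1=\exp((\ln s/\gamma)^{1-\beta})$ then reduces the target integral to a tail of order $\int s^{-c_1/\gamma}\,ds$, which is finite precisely when $\gamma<c_1$.

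The main bookkeeping obstacle I anticipate is the interplay between the free parameter $\delta$ in Theorem~\ref{th4} and the dependent constant $c=c(\delta)$: in case (i) one must first fix $\delta$ small, thereby pinning down $c(\delta)$, and only then send $\varepsilon\to 0$ in the upper bound for $\Lambda$ so as to absorb the factor $c(\delta)\varepsilon$ into the prescribed exponent $\mu$; in cases (ii) and (iii), although $\delta$ disappears into lower-order terms asymptotically, it still enters the explicit values of $A$ and $C_3$ and hence constrains the admissible range of $\gamma_1,\gamma_2,\gamma$.
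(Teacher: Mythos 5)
Your proposal is correct and follows essentially the same route as the paper's own proof: in all three cases you first bound $\Lambda$ from the hypothesis on $\omega$ and feed it into Theorem~\ref{th4}, then in (i)--(ii) expand the exponential into a power series controlled via Stirling's formula (exactly as the paper does, with the same observation that $\gamma_2$ may be chosen freely in (i) but must satisfy $\gamma_2<1/A$ in (ii)), and in (iii) switch to a Chebyshev/layer-cake argument with the same optimal choice $q\sim(\ln M)^{\beta/(1-\beta)}$, yielding the distributional bound $\exp(-c_1(\ln M)^{1/(1-\beta)})$. Your explicit discussion of the $\delta$-versus-$\varepsilon$ ordering in case (i) is a welcome precision that the paper glosses over (the paper writes $q^{\varepsilon q}$ where $q^{(c\varepsilon+\delta)q}$ is meant, but the fix is exactly the one you describe).
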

\begin{proof}
Let us consider case (i), when $\omega(t)=o(|\ln t|^{-1})$. Let $\varepsilon>0$.
There is $s_0\in(0, 1)$ such  that for all $s\in(0, s_0)$ one has
$|\ln\omega^{-1}(s)|\le \varepsilon s^{-1}$, which
follows from the estimate $\omega(t)\le\varepsilon|\ln t|^{-1}$.
Therefore,
$$
\Lambda(cq)\le C_1(\varepsilon)+\varepsilon\ln q,
$$
so that
$$
\int_B|\varrho|^q\,dx\le C_2^qq^{\varepsilon q}.
$$
Let $q=\gamma_2k$ and $\varepsilon\gamma_2=1/2$. We obtain
$$
\int_B\frac{(|\varrho|^{\gamma_2})^k}{k!}\,dx\le \frac{C_3^kk^{k/2}}{k!}.
$$
By Stirling's formula this yields convergence of the series
$$
\int_B\sum_k\frac{(\gamma_1|\varrho|^{\gamma_2})^k}{k!}\,dx,
$$
which implies the integrability of $\exp(\gamma_1|\varrho|^{\gamma_2})$.
Assertion~(ii) is proved similarly.

Let us prove assertion (iii).
The boundedness of the function $\omega(t)|\ln t|^{\beta}$ implies the estimate
$|\ln \omega^{-1}(s)|\le M_1s^{-\tfrac{1}{\beta}}$ with some constant $M_1>0$.
By Theorem \ref{th4},  for all $q\ge 1$ and some $M_2>0$ we obtain the estimate
$$
\ln\|\varrho\|_{L^q(B)}\le M_2q^{\tfrac{1-\beta}{\beta}}.
$$
Chebyshev's inequality gives
$$
\ln\bigl|\{x\in B\colon |\varrho(x)|>t\}\bigr|\le -q\ln t+M_2q^{\tfrac{1}{\beta}}.
$$
Let $q=C(\beta, M')|\ln t|^{\beta/(1-\beta)}$, where $C(\beta, M_2)=(\beta/M_2)^{\tfrac{\beta}{1-\beta}}$.
There is a   number $t_0$ such that for all $t\ge t_0$ one has $q\ge 1$
and for some constant $M_3>0$ the inequality
$$
\ln\bigl|\{x\in B\colon |\varrho(x)|>t\}\bigr|\le -M_3|\ln t|^{1/(1-\beta)}
$$
holds. For any continuously differentiable function $f$ with $f'>0$ we have
$$
\int_Bf(|\varrho(x)|)\,dx\le \int_0^{t_0}\bigl|\{x\in B\colon f(|\varrho(x)|)>t\}\bigr|\,dt+
\int_{f(t_0)}^{+\infty}f'(s)\exp(-M_3|\ln s|^{1/(1-\beta)})\,ds.
$$
It is readily verified that for $f(t)=\exp(\gamma|\ln(t+1)|^{1/(1-\beta)})$ with $\gamma<M_3$
the corresponding integral in the right-hand side is finite, which gives the desired integrability.
\end{proof}

\section{Existence and uniqueness of probability solutions}

In this section we apply the results obtained above for constructing  positive and probability solutions to the Kolmogorov  equation.

Our next theorem   generalizes assertion (i) in Theorem 2.4.1 in \cite{book} to the case where the coefficients $a^{ij}$ satisfy
the Dini mean oscillation condition. It was assumed in the cited theorem that the functions $a^{ij}$ belong locally to the Sobolev class
$W^{p, 1}$ with $p>d$.

\begin{theorem}\label{th5}
Suppose that the coefficients $a^{ij}$ and $b^i$ are defined on all of $\mathbb{R}^d$, $b^i\in L^{d+}_{loc}$,
and for every ball $B$ we can find a   number $\nu_B>0$ and continuous nonnegative increasing function $w_B$ on $[0, 1]$ such
that $w_B(0)=0$, the integral $\displaystyle\int_0^{1}\frac{w_B(t)}{t}\,dt$ converges and
$$
\nu_B\cdot I\le A(x) \le \nu_B^{-1}\cdot I, \quad
\sup_{x\in B}\frac{1}{|B(x, r)|}\int_{B(x, r)} |a^{ij}(y)-a^{ij}_B(x, r)|\,dy\le w_B(r), \quad r\in(0, 1].
$$
Then there exists a continuous and positive solution $\varrho$ to equation  {\rm (\ref{eq1})} on $\mathbb{R}^d$.
\end{theorem}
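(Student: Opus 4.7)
The plan is to regularize the coefficients by mollification, apply the existence result already known for the $W^{p,1}_{loc}$ case (Theorem~2.4.1(i) in \cite{book}), and then pass to the limit using the uniform Harnack inequality provided by Theorem~\ref{th12}.

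First I would set $a^{ij}_\varepsilon=a^{ij}*\psi_\varepsilon$ and $b^i_\varepsilon=b^i*\psi_\varepsilon$ for a standard mollifier $\psi_\varepsilon$. The smoothed matrix $A_\varepsilon$ is of class $C^\infty$, hence lies in $W^{p,1}_{loc}$ for every $p>d$, and $b^i_\varepsilon$ is smooth with $\|b^i_\varepsilon\|_{L^p(B)}\le \|b^i\|_{L^p(B')}$ on a slightly enlarged ball $B'$. The uniform ellipticity on each ball is preserved, and a Jensen/Fubini swap shows that for every ball $B$ the Dini mean oscillation modulus of $a^{ij}_\varepsilon$ on $B$ is controlled by the modulus $w_{B'}$ of $a^{ij}$ on a slightly enlarged ball. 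Hence by \cite[Theorem~2.4.1(i)]{book} there exists a positive continuous solution $\varrho_\varepsilon$ on $\mathbb{R}^d$ of $L^*_\varepsilon\varrho_\varepsilon=0$; I would then work with the continuous version and normalize $\varrho_\varepsilon(0)=1$.

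Next I would apply Theorem~\ref{th12} to each $\varrho_\varepsilon$. Since the constants in Theorem~\ref{th12} depend only on the radius, the Dini modulus $w$, $d$, $\nu$, $p$, and the local $L^p$-norm of the drift, and since all of these are uniform in $\varepsilon$ by the previous step, both the Harnack constant and the modulus of continuity on any compact set $K$ are uniform in $\varepsilon$. Combined with $\varrho_\varepsilon(0)=1$, this gives uniform upper and (strictly positive) lower bounds on $\varrho_\varepsilon$ on $K$ together with equicontinuity on $K$. By Arzel\`a--Ascoli, a subsequence $\varrho_{\varepsilon_k}$ converges locally uniformly to a continuous function $\varrho\ge 0$ with $\varrho(0)=1$. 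Passing to the limit in
$$
\int_{\mathbb{R}^d}\bigl(a^{ij}_{\varepsilon_k}\partial_{x_i}\partial_{x_j}\varphi+b^i_{\varepsilon_k}\partial_{x_i}\varphi\bigr)\varrho_{\varepsilon_k}\,dx=0,\qquad \varphi\in C_0^\infty(\mathbb{R}^d),
$$
is routine given the local uniform convergence of $\varrho_{\varepsilon_k}$, the bounded a.e.\ convergence $a^{ij}_{\varepsilon_k}\to a^{ij}$, and the $L^p_{loc}$ convergence $b^i_{\varepsilon_k}\to b^i$. Hence $L^*\varrho=0$, and a final application of Theorem~\ref{th12} to the nonnegative continuous $\varrho$ with $\varrho(0)=1$ upgrades nonnegativity to strict positivity on~$\mathbb{R}^d$.

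The main obstacle I foresee is the uniformity in $\varepsilon$ of the Harnack constants: one must verify that the estimates of Theorem~\ref{th12} depend only on the quantities named in its statement, and that the Dini mean oscillation modulus really transfers to a mollified matrix on a slightly enlarged ball with quantitative control. Once the uniform constants and the equicontinuity are secured, the compactness argument and passage to the limit are standard.
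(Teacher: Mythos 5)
Your proposal is correct and follows essentially the same path as the paper's own proof: mollify the coefficients, invoke \cite[Theorem~2.4.1]{book} to obtain smooth positive solutions normalized at the origin, use Theorem~\ref{th12} for a uniform Harnack inequality and uniform modulus of continuity, extract a locally uniformly convergent subsequence via Arzel\`a--Ascoli, pass to the limit in the weak formulation, and apply the Harnack inequality once more to obtain strict positivity. The only differences are cosmetic (a sequence $k\to\infty$ versus $\varepsilon\to 0$, and the paper states rather than elaborates the transfer of the Dini mean oscillation bound under mollification).
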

\begin{proof}
Let $\zeta\in C_0^{\infty}(\mathbb{R}^d)$ have support in $B(0, 1)$, $\zeta\ge 0$ and
$\|\zeta\|_{L^1(\mathbb{R}^d)}=1$. For every $k\in\mathbb{N}$ let $\zeta_k$ denote the function $k^d\zeta(kx)$ and let
$$
a^{ij}_k=a^{ij}*\zeta_k, \quad b^i_k=b^i*\zeta_k.
$$
The functions $a^{ij}_k$ and $b^i_k$ are infinitely differentiable and on every ball $B_R=B(0, R)$ we have
$$
\nu_{B_{R+1}}\cdot I\le A_k\le \nu_{B_{R+1}}^{-1}\cdot I,
$$
$$
\|b_k\|_{L^{p_R}(B_{R})}\le \|b\|_{L^{p_R}(B_{R+1})}, \quad p_R=p(B_{R+1}).
$$
Moreover,
$$
\sup_{x\in B_R}\frac{1}{|B(x, r)|}\int_{B(x, r)} |a^{ij}_k(y)-{a^{ij}_k}_B(x, r)|\,dy\le w_{B_{R+1}}(r).
$$
According to \cite[Theorem 2.4.1]{book}, there exists a positive smooth solution $\varrho_k$ to the equation
$$
\partial_{x_i}\partial_{x_j}\bigl(a^{ij}_k\varrho_k\bigr)-\partial_{x_i}\bigl(b^i_k\varrho_k\bigr)=0.
$$
Multiplying by a constant, we can assume that $\varrho_k(0)=1$.
Applying the Harnack inequality from Theorem \ref{th12},  for every ball we obtain the inequality
$$
\sup_{B_R}\varrho_k(x)\le C_R\varrho_k(0)=C_R,
$$
where the constant $C_R$ does not depend on $k$. By Theorem \ref{th12} the modulus of continuity of $\varrho_k$ on every ball $B_R$
is estimated by some function $\omega_R$ that does not depend on $k$, is continuous at zero and $\omega_R(0)=0$.
Thus, on every ball the sequence $\{\varrho_k\}$ is uniformly bounded and equicontinuous,
hence, has a subsequence that converges to some nonnegative continuous function $\varrho$
uniformly on every ball. In addition, on every ball $B_R$ the sequence $\{a^{ij}_k\}$ converges uniformly
to $a^{ij}$ and the sequence $\{b^i_k\}$ converges to $b^i$ in $L^{p_R}(B_R)$. Passing to the limit in the integral identities
determining the solutions $\varrho_k$, we obtain an analogous integral equality for~$\varrho$. Therefore, the function $\varrho$
is a solution to equation (\ref{eq1}). Since $\varrho(0)=1$ and the function $\varrho$ satisfies the Harnack inequality by Theorem \ref{th12},
 it is positive.
\end{proof}

As a  corollary we obtain a generalization of the Hasminskii theorem,
which follows from Theorem \ref{th5} and \cite[Corollary 2.3.3]{book}.

\begin{corollary}
If in addition to the hypotheses of Theorem {\rm\ref{th5}} there is a function $V$ of class
$W^{d, 2}_{loc}(\mathbb{R}^d)$ along with numbers $C>0$ and $R>0$ for which
$$
\lim_{|x|\to+\infty}V(x)=+\infty, \quad LV(x)\le -C \quad \hbox{\rm if} \quad |x|>R,
$$
then there exists a continuous positive probability solution $\varrho$
to equation~{\rm (\ref{eq1})} on $\mathbb{R}^d$.
\end{corollary}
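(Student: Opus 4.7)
The plan is to combine the existence statement of Theorem \ref{th5} with a standard Lyapunov-type integrability criterion for solutions of the stationary Kolmogorov equation, as recorded in \cite[Corollary 2.3.3]{book}.

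First, I would apply Theorem \ref{th5} to the hypotheses at hand. Since $A$ is uniformly elliptic on every ball, satisfies the Dini mean oscillation condition on every ball, and $b\in L^{d+}_{loc}$, Theorem \ref{th5} furnishes a continuous and strictly positive solution $\varrho$ to (\ref{eq1}) on all of $\mathbb{R}^d$. The job is then reduced to showing that $\varrho\in L^1(\mathbb{R}^d)$: once this is established, the function $\varrho/\|\varrho\|_{L^1(\mathbb{R}^d)}$ is the required continuous positive probability solution.

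Next, I would exploit the Lyapunov condition $LV\le -C$ on $\{|x|>R\}$ together with the coercivity $V(x)\to+\infty$. The strategy is to substitute into the integral identity $\int L\varphi\,\varrho\,dx=0$ a sequence of test functions built from $V$, for instance $\varphi_n=\chi(V/n)$ or $\varphi_n=V\psi_n$ with a smooth cutoff $\psi_n$ supported in $B(0,2n)$. After expanding $L\varphi_n$ and using $LV\le -C$ outside $B(0,R)$ together with the nonnegativity of $\varrho$, one arrives at a bound of the form $C\int_{|x|>R}\varrho\,dx\le\int_{|x|\le R} f\,\varrho\,dx + (\text{boundary terms})$, where the boundary terms vanish in the limit because $V$ diverges at infinity. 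This is precisely the content of \cite[Corollary 2.3.3]{book}, which applies because the regularity assumptions on $V$ ($W^{d,2}_{loc}$) and on the coefficients ($A$ uniformly elliptic and bounded on balls, $b\in L^{d+}_{loc}$) match those required there, and because $\varrho$ is an $L^1_{loc}$-solution.

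The main subtlety, already addressed in \cite{book}, is the rigorous justification of substituting a merely $W^{d,2}_{loc}$ test function $V$ into the distributional identity, which requires approximating $V$ by smoother functions, extending the class of admissible test functions beyond $C_0^{\infty}(\Omega)$ (using $\varrho\in L^{p/(p-1)}_{loc}$, cf.\ the remarks preceding Proposition \ref{prop3}), and controlling the cross terms $a^{ij}\partial_i V\partial_j\psi_n$ through the coerciveness of $V$. Since all of this is carried out in \cite[Corollary 2.3.3]{book} in the generality needed here, the present corollary follows immediately by quoting that result together with Theorem \ref{th5}.
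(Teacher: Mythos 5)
Your proposal is correct and takes essentially the same route as the paper: the paper's own ``proof'' is exactly the one-line observation that the corollary follows from Theorem~\ref{th5} (giving a continuous positive solution) together with \cite[Corollary~2.3.3]{book} (giving integrability of that solution from the Lyapunov condition $LV\le -C$ outside a compact set, after which one normalizes). Your elaboration on how \cite[Corollary~2.3.3]{book} works internally (substituting $V$-based test functions and controlling the boundary terms) is consistent with, though not required by, the paper's argument.
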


Note that assertion (ii) in \cite[Theorem 2.4.1]{book} gives a nonzero nonnegative solution
to equation (\ref{eq1}) under the assumption that the matrix $A$ is locally
positive definite and bounded and the drift  coefficient $b$ is locally bounded. If, in addition, there is
a Lyapunov function, then there exists a probability solution. In the results obtained above the condition
on $A$ is stronger and the condition on $b$ is weaker.
Let us show that the condition on $A$ can be weakened to the inclusion in the class VMO if the drift $b$ is dissipative
(which is stronger than the existence of a Lyapunov function).

\begin{theorem}\label{th51}
Suppose that the coefficients $a^{ij}$ and $b^i$ are defined on all of $\mathbb{R}^d$, $b^i\in L^{d+}_{loc}$,
and there exist numbers $\nu>0$ and $M>0$ and an increasing continuous function $\omega$ on $[0, +\infty)$ such that $\omega(0)=0$ and
$$
\nu\cdot I\le A(x)\le\nu^{-1}\cdot I, \quad
\sup_{z\in\mathbb{R}^d}r^{-2d}\int_{B(z, r)}\int_{B(z, r)}|a^{ij}(x)-a^{ij}(y)|\,dxdy\le\omega(r).
$$
Suppose also that
$$
\lim\limits_{|x|\to\infty}\langle b(x), x\rangle=-\infty.
$$
Then there exists a probability solution $\varrho$ to equation {\rm (\ref{eq1})} on $\mathbb{R}^d$.
\end{theorem}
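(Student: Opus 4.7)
The plan is to mollify the diffusion matrix $A$ (keeping the drift $b$ unchanged), use Theorem \ref{th5} together with its corollary to produce probability solutions $\varrho_k$ for the regularized equations, and then pass to the limit. The advantage of mollifying only $A$ is that the dissipativity of $b$ is preserved exactly, with no correction term to estimate.

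First, I would fix a standard mollifier $\zeta_k\in C_0^\infty(B(0,1/k))$ and set $A_k=A*\zeta_k$. Uniform ellipticity with constant $\nu$ is immediate, and a short Fubini calculation (substitute $x\mapsto x-u$, $y\mapsto y-u$ inside the double integral defining the $VMO$ modulus) shows that the $VMO$ modulus of $A_k$ is dominated by $\omega$ uniformly in $k$. Each $A_k$ is smooth and therefore satisfies the Dini mean oscillation condition required by Theorem \ref{th5}. Since $b$ is unchanged, the Lyapunov function $V(x)=|x|^2$ satisfies
\[
L_kV(x)=2\,{\rm tr}\,A_k(x)+2\langle b(x),x\rangle\le 2d\nu^{-1}+2\langle b(x),x\rangle\le -1\quad\text{for }|x|>R_0,
\]
uniformly in $k$. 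Hence Theorem \ref{th5} together with its corollary produces a continuous positive probability solution $\varrho_k$ to each regularized equation on $\mathbb{R}^d$.

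Next, I would establish two uniform estimates. Tightness of $\{\varrho_k\,dx\}$ follows from the Lyapunov identity: testing $L_k^*\varrho_k=0$ against smooth radial cutoffs of $V$ and letting the cutoff radius grow yields an a priori bound $\int|L_kV|\,\varrho_k\,dx\le M$ independent of $k$; since $\inf_{|x|>R}|L_kV(x)|\to\infty$ as $R\to\infty$ (because $\langle b(x),x\rangle\to-\infty$), we deduce $\varrho_k(\{|x|>R\})\to 0$ uniformly in $k$. Uniform local integrability $\sup_k\|\varrho_k\|_{L^q(B)}<\infty$ for every ball $B$ and every $q\ge 1$ is supplied by \cite[Theorem 2.1]{BSH17} applied to the regularized equations, as both $\omega$ and $\|b\|_{L^p(B)}$ are fixed.

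Finally, I would extract a subsequence $\varrho_k\rightharpoonup\varrho$ weakly in $L^{p'}_{loc}$, where $1/p+1/p'=1$. Since $A_k\to A$ almost everywhere with $\|A_k\|_{\infty}\le\nu^{-1}$ and $b$ is fixed, passing to the limit in the distributional identity $\int L_k\varphi\cdot\varrho_k\,dx=0$ for $\varphi\in C_0^\infty(\mathbb{R}^d)$ gives $L^*\varrho=0$; tightness together with $\int\varrho_k\,dx=1$ then forces $\int\varrho\,dx=1$. The main obstacle I expect is the rigorous execution of the tightness step, since testing against the unbounded $V$ requires a cutoff-and-bootstrap procedure: first a uniform bound on $\int V\,\varrho_k\,dx$ must be derived via a truncated Lyapunov function (using the uniform local integrability of $\varrho_k$), and only then can the boundary terms in the full Lyapunov identity be shown to vanish as the cutoff radius grows.
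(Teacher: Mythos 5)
Your proposal is correct and takes essentially the same route as the paper: mollify only $A$ so that the dissipativity of $b$ is preserved, produce a probability solution $\varrho_k$ for each regularized equation via the Lyapunov function $V(x)=|x|^2$, obtain uniform local $L^s$ bounds from \cite[Theorem~2.1]{BSH17}, establish tightness via a Lyapunov-type estimate, and pass to the weak $L^{p'}_{loc}$ limit. The cutoff-and-bootstrap procedure you flag as the main technical obstacle is handled in the paper simply by invoking \cite[Theorem~2.3.2]{book}, which yields
$$
\int_{\mathbb{R}^d}|\langle b(x), x\rangle|\varrho_k(x)\,dx\le d\nu^{-1}+2\int_{|x|\le R}|\langle b(x), x\rangle|\varrho_k(x)\,dx,
$$
uniformly in $k$ once $R$ is chosen with $\langle b(x),x\rangle<0$ for $|x|>R$; combined with the uniform local $L^{p'}$ bound on $\varrho_k$ and $\langle b(x),x\rangle\to-\infty$, this gives tightness directly.
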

\begin{proof}
Let $\zeta\in C_0^{\infty}(\mathbb{R}^d)$ have support in $B(0, 1)$, $\zeta\ge 0$ and
$\|\zeta\|_{L^1(\mathbb{R}^d)}=1$. For each $k\in\mathbb{N}$ let $\zeta_k(x)=k^d\zeta(kx)$ and
$a^{ij}_k=a^{ij}*\zeta_k$. The functions  $a^{ij}_k$ are infinitely differentiable and
$$
\nu\cdot I\le A_k(x)\le\nu^{-1}\cdot I, \quad
\sup_{z\in\mathbb{R}^d}r^{-2d}\int_{B(z, r)}\int_{B(z, r)}|a_k^{ij}(x)-a_k^{ij}(y)|\,dxdy\le\omega(r).
$$
In addition, for the function $V(x)=|x|^2/2$ we have
$$
{\rm tr}\bigl(A_k(x)D^2V(x)\bigr)+\langle b(x), \nabla V(x)\rangle\le \nu^{-1}d+\langle b(x), x\rangle\to-\infty.
$$
Therefore, for every $k$ there exists a probability solution $\varrho_k$ of the equation
$$
\partial_{x_i}\partial_{x_j}\bigl(a^{ij}_k\varrho_k\bigr)-\partial_{x_i}\bigl(b^i\varrho_k\bigr)=0.
$$
According to \cite[Theorema 2.1]{BSH17}, for every ball $B$ and every $s>1$ there exists a
number $C(s, B)>0$ independent of $k$ such that $\|\varrho_k\|_{L^s(B)}\le C(s, B)$.
In addition, according to \cite[Theorem 2.3.2]{book} one has
$$
\int_{\mathbb{R}^d}|\langle b(x), x\rangle|\varrho_k(x)\,dx\le
d\nu^{-1}+2\int_{|x|\le R}|\langle b(x), x\rangle|\varrho_k(x)\,dx,
$$
where $R>0$ is taken such that $\langle b(x), x\rangle<0$ whenever $|x|>R$. Observe that
$$
\int_{|x|\le R}|\langle b(x), x\rangle|\varrho_k(x)\,dx\le C(p', B_R)\|b\|_{L^p(B_R)}, \quad p'=p/(p-1), \quad B_R=B(0, R).
$$
Therefore, the sequence of measures $\varrho_k\,dx$ contains a subsequence  converging
weakly to some probability measure $\mu$. Since $\|\varrho_k\|_{L^s(B)}\le C(s, B)$, we can assume that this
subsequence converges weakly in $L^{p'}(B)$ for every ball $B$. Hence the measure $\mu$ has a density
$\varrho\in L^{p'}_{loc}(\mathbb{R}^d)$ and for every function $\varphi\in C_0^{\infty}(\mathbb{R}^d)$ we can pass to the limit
as $k\to\infty$ in the equality
$$
\int_{\mathbb{R}^d}\bigl[a^{ij}_k\partial_{x_i}\partial_{x_j}\varphi
+b^i\partial_{x_i}\varphi\bigr]\varrho_k(x)\,dx=0.
$$
The function $\varrho$ is a probability solution of equation (\ref{eq1}).
\end{proof}

Zvonkin's transform enables us to obtain the following modification of the Hasminskii theorem generalizing
\cite[Theorem 4.10(iii)]{ZZ18} to the matrix $A$ of class VMO.

\begin{corollary}\label{corzvon}
Suppose that the coefficients $a^{ij}$ and $b^i$ are defined on all of $\mathbb{R}^d$ and we can find a
number $\nu>0$ and an increasing continuous function $\omega$ on $[0, +\infty)$ such that $\omega(0)=0$ and
$$
\nu\cdot I\le A(x)\le\nu^{-1}\cdot I, \quad
\sup_{z\in\mathbb{R}^d}r^{-2d}\int_{B(z, r)}\int_{B(z, r)}|a^{ij}(x)-a^{ij}(y)|\,dxdy\le\omega(r).
$$
Let also $b^i=b_1^i+b_2^i$, where $b_2\in L^p(\mathbb{R}^d)$ with $p>d$ and
$$
\lim_{|x|\to\infty}\langle b_1(x), x\rangle\le C_1-C_2|x|^{\kappa+1}, \quad |b_1(x)|\le C_3(1+|x|^{\kappa}),
$$
for some numbers $C_1, C_2, C_3>0$ and $\kappa>0$. Then there exists a probability solution $\varrho$ to equation {\rm (\ref{eq1})} on $\mathbb{R}^d$.
\end{corollary}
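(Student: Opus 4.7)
The plan is to use Zvonkin's transform globally to reduce Corollary \ref{corzvon} to Theorem \ref{th51}. Since $b_2\in L^p(\mathbb R^d)$ with $p>d$, Proposition \ref{prop1} applies verbatim with $\beta:=b_2$ (the truncation device is needed only when the relevant part of the drift is merely locally integrable): for every $\delta>0$ one can find $\lambda$ so large that the vector field $u=(u^1,\dots,u^d)$ solving ${\rm tr}(AD^2u^k)+\langle b_2,\nabla u^k\rangle-\lambda u^k=-b_2^k$ satisfies $\|\nabla u\|_{L^\infty}\le\delta$ and $\|u\|_{L^\infty}\le C$ with $C$ depending only on $\nu,\omega,d,\|b_2\|_{L^p(\mathbb R^d)}$. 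Then $\Phi(x)=x+u(x)$ is a global $C^1$-diffeomorphism of $\mathbb R^d$ by Proposition \ref{prop2}, with Jacobian uniformly close to the identity and H\"older continuous of order $1-d/p$.

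The second step is to repeat the computation of Proposition \ref{prop3} with the decomposition $b=b_1+b_2$ to identify the equation for $\sigma(y):=|\det\Psi'(y)|\varrho(\Psi(y))$. Since only the singular part $b_2$ is absorbed by the PDE defining $u$, the identity
$$a^{ij}\partial_{x_i}\partial_{x_j}\Phi^k+b^i\partial_{x_i}\Phi^k=\lambda u^k+b_1^k+b_1^i\partial_{x_i}u^k$$
shows that $\sigma$ satisfies $\mathcal L^*\sigma=0$ with diffusion $q^{km}(y)=a^{ij}(\Psi(y))\partial_{x_i}\Phi^k(\Psi(y))\partial_{x_j}\Phi^m(\Psi(y))$ and drift $h^k(y)=\lambda u^k(\Psi(y))+b_1^k(\Psi(y))+b_1^i(\Psi(y))\partial_{x_i}u^k(\Psi(y))$. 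Since $\Psi$ is bi-Lipschitz and the H\"older factors $\partial_{x_i}\Phi^k$ are bounded, a direct estimate of mean oscillations shows that $Q=(q^{km})$ is still uniformly elliptic and lies in VMO, with a new modulus of the form $C\omega(2r)+Cr^{1-d/p}$. The drift $h$ is locally bounded, hence in $L^{d+}_{\rm loc}$, because $b_1$ is locally bounded by the growth assumption $|b_1(x)|\le C_3(1+|x|^\kappa)$ and $u,\nabla u$ are bounded.

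The main obstacle is verifying that the new drift remains dissipative. Writing $x=\Psi(y)$, so that $y=x+u(x)$, a direct expansion gives
$$\langle h(y),y\rangle=\langle b_1(x),x\rangle+\langle b_1(x),u'(x)^T x\rangle+\langle (I+u'(x))b_1(x),u(x)\rangle+\lambda\langle u(x),y\rangle.$$
Using $\|u'\|_{L^\infty}\le\delta$, $\|u\|_{L^\infty}\le C$ and $|b_1(x)|\le C_3(1+|x|^\kappa)$, the second summand is bounded by $\delta C_3|x|^{\kappa+1}+\delta C_3|x|$, the third by $O(|x|^\kappa)$, and the last (with $\lambda$ now fixed) by $O(|x|)$. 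Since $\langle b_1(x),x\rangle\le C_1-C_2|x|^{\kappa+1}$ for large $|x|$, choosing $\delta<C_2/(2C_3)$ (which is allowed by making $\lambda$ larger in Proposition \ref{prop1}) yields $\langle h(y),y\rangle\le -(C_2/2)|x|^{\kappa+1}+O(|x|)$; because $\kappa>0$ and $|y|-C\le|x|\le|y|+C$, this tends to $-\infty$ as $|y|\to\infty$.

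With VMO ellipticity of $Q$, local integrability of $h$ and the dissipativity just established, Theorem \ref{th51} produces a probability solution $\sigma$ to $\mathcal L^*\sigma=0$ on $\mathbb R^d$. I then define $\varrho(x):=\sigma(\Phi(x))|\det\Phi'(x)|$; running the computation of Proposition \ref{prop3} in reverse (which is legitimate because $\Phi$ is a global $C^1$-diffeomorphism and $\sigma\in L^{p/(p-1)}_{\rm loc}$ by the a priori estimates of \cite[Theorem 2.1]{BSH17}) confirms that $\varrho$ solves equation (\ref{eq1}), while the change-of-variables formula gives $\int\varrho\,dx=\int\sigma\,dy=1$, yielding the required probability solution.
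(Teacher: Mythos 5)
Your proposal is correct and follows essentially the same route as the paper: apply Proposition \ref{prop1} globally with $\beta=b_2$ (no truncation needed since $b_2\in L^p(\mathbb R^d)$ already), build $\Phi=I+u$, identify the transformed drift $h=\lambda u\circ\Psi+(\Phi' b_1)\circ\Psi$, check dissipativity of $h$ using $|u|\le M$, $|\nabla u|\le\delta$ with $\delta C_3<C_2$, invoke Theorem \ref{th51}, and transfer the probability solution back via $\varrho=\sigma(\Phi)|\det\Phi'|$. You spell out two points the paper leaves implicit---that $Q=(q^{km})$ stays uniformly elliptic and VMO (by the same oscillation-averaging argument as Lemma \ref{Dinisave}(i)), and that the change of variables is reversible because $\Phi$ is a global $C^1$-diffeomorphism---which is a welcome amount of extra care, but the substance and all key estimates coincide with the paper's argument.
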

\begin{proof}
Applying Proposition \ref{prop1}, for every $\delta>0$ we construct a   mapping $u=(u^1,\ldots, u^d)$ such that $u^k$ is a solution to the equation
$$
{\rm tr}(AD^2u^k)+\langle b_2, \nabla u^k\rangle-\lambda u^k=-b_2^k,
$$
$u^k\in L^{\infty}(\mathbb{R}^d)$ and $\sup_x|\nabla u^k(x)|<\delta$.
According to Proposition \ref{prop2}, for $\delta>0$ sufficiently small the mapping $\Phi(x)=x+u(x)$ is a diffeomorphism of $\mathbb{R}^d$.
Let $\Psi=\Phi^{-1}$. It is readily seen that $|\Psi(y)|\to\infty$ as $|y|\to\infty$. Let us consider the equation
\begin{equation}\label{eqq1}
\partial_{y_k}\partial_{y_m}\bigl(q^{km}\sigma\bigr)-\partial_{y_k}\bigl(h^k\sigma\bigr)=0,
\end{equation}
where
$$
q^{km}(y)=a^{ij}(\Psi(y))\partial_{x_i}\Phi^k(\Psi(y))\partial_{x_j}\Phi^m(\Psi(y)),
$$
$$
h(y)=\lambda u(\Psi(y))+b_1^i(\Psi(y))\partial_{x_i}\Phi(\Psi(y)).
$$
Since $\Phi(x)=x+u(x)$, we have
$$
\langle h(y), y\rangle(\Phi(x))=\lambda\langle u(x), x+u(x)\rangle+
\langle b_1(x), x+u(x)\rangle+
\langle b_1^i(x)\partial_{x_i}u(x), x+u(x)\rangle.
$$
Let $|u(x)|\le M$ and $|\nabla u(x)|\le \delta$. Then
$$
\langle h(y), y\rangle(\Phi(x))\le \lambda M^2+\lambda M|x|+M|b_1(x)|+\delta|b_1(x)||x|+\delta M|b_1(x)|
+\langle b_1(x), x\rangle.
$$
Using our condition on $b_1$, we can estimate the right side by
$$
C_1+\lambda M^2+\lambda M|x|+(1+\delta)MC_3(1+|x|^{\kappa})+\delta C_3(1+|x|^{\kappa+1})-C_2|x|^{\kappa+1}.
$$
Let $\delta C_3<C_2$. Then $\langle h(y), y\rangle\to -\infty$ as $|y|\to\infty$.
Therefore, the hypotheses of Theorem \ref{th51} are fulfilled and there exists a probability solution
$\sigma$ of equation (\ref{eqq1}). It is clear that $\varrho(x)=\sigma(\Phi(x))|{\rm det}\Phi'(x)|$ is a probability solution to equation (\ref{eq1}).
\end{proof}

In connection with the existence theorems proved above we discuss a probabilistic interpretation of
probability solutions of the  Kolmogorov equation. Let $a^{ij}$ and $b^i$ be Borel functions on $\mathbb{R}^d$ and let $\mu$ be a Borel probability
 measure satisfying the stationary  Kolmogorov equation
$$
\partial_{x_i}\partial_{x_j}\bigl(a^{ij}\mu\bigr)-\partial_{x_i}\bigl(b^i\mu\bigr)=0,
$$
understood in the  sense of the integral equality
$$
\int_{\mathbb{R}^d}\bigl[a^{ij}\partial_{x_i}\partial_{x_j}\varphi
+b^i\partial_{x_i}\varphi\bigr]\,d\mu=0 \quad \forall \varphi\in C_0^{\infty}(\mathbb{R}^d).
$$

\begin{theorem}\label{th6}
Suppose that $a^{ij}, b^i\in L^1_{loc}(\mu)$ and
$$
\int_{\mathbb{R}^d}\frac{\|A(x)\|+|\langle b(x), x\rangle|}{1+|x|^2}\,\mu(dx)<\infty.
$$
Then for every $T>0$ one can find a probability space $(Q, \mathcal{F}, P)$, a filtration  $\mathcal{F}_t$, and
a continuous random process $\xi_t$ and a Wiener process $w_t$ adapted to the filtration $\mathcal{F}_t$  such that
$$
d\xi_t=b(\xi_t)\,dt+\sqrt{2A(\xi_t)}dw_t
$$
on $[0, T]$ and the distribution of the random variable $\omega\to \xi_t(\omega)$ equals  $\mu$ for all $t$.
Moreover, if it is known that for every probability measure $\sigma$ on~$\mathbb{R}^d$
there exists a unique weak solution $\xi_t^{\sigma}$ of the indicated stochastic equation
on $[0, +\infty)$ with initial distribution $\sigma$ and $P_{\sigma}$ is a distribution of $\xi^{\sigma}$,
then the  measure $\mu$ is invariant for the semigroup
$$
T_tf(x)=\int_{C([0, +\infty), \mathbb{R}^d)}f(\xi_t)\, P_{\delta_x}(d\xi)
$$
on the space of bounded continuous functions, that is, for every bounded continuous function $f$ and every $t\ge 0$ we have the identity
$$
\int_{\mathbb{R}^d}T_tf(x)\mu(dx)=\int_{\mathbb{R}^d}f(x)\mu(dx).
$$
\end{theorem}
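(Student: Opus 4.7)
The plan is to lift the stationary measure $\mu$ to a solution of the parabolic Fokker--Planck equation, then invoke the superposition principle to produce a diffusion process on path space whose one-dimensional marginals equal $\mu$, and finally exploit weak uniqueness to derive invariance.

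First I would observe that because $L^{*}\mu=0$ in the distributional sense, the constant family $\mu_{t}\equiv\mu$ on $[0,T]$ solves the parabolic equation $\partial_{t}\mu_{t}=L^{*}\mu_{t}$: for every $\varphi\in C_{0}^{\infty}(\mathbb{R}^d)$ the map $t\mapsto\int\varphi\,d\mu_{t}$ is constant and has derivative $\int L\varphi\,d\mu=0$. The weighted integrability assumption supplies the Lyapunov control required below, since for $V(x)=\ln(1+|x|^{2})$ one has the pointwise bound $|LV(x)|\le C(\|A(x)\|+|\langle b(x),x\rangle|)/(1+|x|^{2})$, so $LV\in L^{1}(\mu)$ and $V$ has well-controlled moments along the parabolic solution.

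Next I would apply the superposition principle for Fokker--Planck equations, as developed in \cite[Chapter 2]{book} (in the spirit of Figalli and Trevisan): under precisely the displayed integrability condition any weak solution $(\mu_{t})$ of the parabolic equation is the family of one-dimensional marginals of some probability measure $P$ on $C([0,T],\mathbb{R}^{d})$ solving the martingale problem for $L$, i.e.,
$$M_{t}^{\varphi}:=\varphi(\xi_{t})-\varphi(\xi_{0})-\int_{0}^{t}L\varphi(\xi_{s})\,ds$$
is a $P$-martingale for every $\varphi\in C_{0}^{\infty}(\mathbb{R}^{d})$. The weighted moment condition is what rules out explosion and yields the tightness needed to pass from mollified approximations with smooth coefficients to the limit; this is the main technical input. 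Once $P$ is in hand, a standard enlargement of the probability space combined with the martingale representation theorem (using nondegeneracy of $A$, so that $\sqrt{2A}$ admits a measurable square root with the right quadratic variation) produces the filtered probability space $(Q,\mathcal{F},\mathcal{F}_{t},P)$, an $\mathcal{F}_{t}$-Wiener process $w_{t}$, and a continuous adapted process $\xi_{t}$ satisfying $d\xi_{t}=b(\xi_{t})\,dt+\sqrt{2A(\xi_{t})}\,dw_{t}$ on $[0,T]$ with $\mathrm{Law}(\xi_{t})=\mu$ for all $t\in[0,T]$.

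For the invariance statement, I would invoke the weak uniqueness hypothesis. By disintegration $P_{\mu}=\int P_{\delta_{x}}\,\mu(dx)$, and weak uniqueness identifies the measure $P$ constructed above (which has initial distribution $\mu$ and solves the martingale problem) with $P_{\mu}$. Since the one-dimensional marginal of $P$ at time $t$ is $\mu$, for every bounded continuous $f$ and every $t\ge 0$,
$$\int_{\mathbb{R}^{d}}T_{t}f(x)\,\mu(dx)=\int_{\mathbb{R}^{d}}\!\!\int_{C([0,\infty),\mathbb{R}^{d})}\!\!f(\xi_{t})\,P_{\delta_{x}}(d\xi)\,\mu(dx)=\int f(\xi_{t})\,P_{\mu}(d\xi)=\int_{\mathbb{R}^{d}}f\,d\mu,$$
which is the asserted invariance. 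The hard part throughout is the rigorous application of the superposition principle under the weak integrability hypothesis imposed; the remaining ingredients (enlargement of the probability space, martingale representation, disintegration, passage to marginals) are standard.
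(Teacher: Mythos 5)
Your proposal follows essentially the same route as the paper: lift $\mu$ to a constant-in-time solution of the parabolic equation, apply the Ambrosio--Figalli--Trevisan superposition principle to obtain a path-space measure solving the martingale problem with marginals $\mu$, pass to a weak solution of the SDE via the Ikeda--Watanabe representation, and then derive invariance from weak uniqueness together with the disintegration $P_\mu=\int P_{\delta_x}\,\mu(dx)$. The only substantive step you compress under the label ``standard'' is the one the paper treats most carefully: the Borel measurability of the map $x\mapsto P_{\delta_x}$, without which the disintegration integral is not defined. The paper establishes this by showing that the set of pairs $(P,\sigma)$ for which $P$ solves the martingale problem with initial distribution $\sigma$ is a Borel subset of $\mathcal{P}(C([0,\infty),\mathbb{R}^d))\times\mathcal{P}(\mathbb{R}^d)$, being describable by a countable family of integral equalities (a countable test set of functions, rational times, and countably many functions for comparing conditional expectations), so that measurability of $x\mapsto P_{\delta_x}$ follows from the uniqueness hypothesis. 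Filling in that point would make your argument complete; the rest matches the paper's proof, and your Lyapunov remark with $V(x)=\ln(1+|x|^2)$ correctly explains why the displayed integrability hypothesis is exactly what the superposition principle needs.
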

\begin{proof}
By the Ambrosio-Figalli-Trevisan superposition principle \cite{BRSH-sup}
there exists a Borel probability  measure $P_{\mu}$ on $C([0, T], \mathbb{R}^d)$
satisfying the following conditions: (i) $P_{\mu}(\{\xi\colon\, \xi_t\in Q\})=\mu(Q)$ for every Borel set $Q$
and every $t\in[0, T]$, (ii) for every function $f\in C_0^{\infty}(\mathbb{R}^d)$ the mapping
$$
(\xi, t)\to f(\xi_t)-f(\xi_0)-\int_0^t\bigl[a^{ij}(\xi_s)\partial_{x_i}\partial_{x_j}f(\xi_s)
+b^i(\xi_s)\partial_{x_i}f(\xi_s)\bigr]\,ds
$$
is a martingale with respect to $P_{\mu}$ and the filtration $\sigma(\xi_s, s\le t)$.
According to \cite[Proposition 2.1, Ch.~IV]{IkeW}, one can find a probability space $(Q, \mathcal{F}, P)$, a filtration $\mathcal{F}_t$,
and a continuous random process $\xi_t$ and a Wiener process $w_t$ adapted to $\mathcal{F}_t$ such that
$$
d\xi_t=b(\xi_t)\,dt+\sqrt{2A(\xi_t)}dw_t
$$
on $[0, T]$ and the distribution of $\xi_{t}$ coincides with $P_{\mu}$, in particular, the one-dimensional distribution
of the process $\xi_t$ does not depend on $t$ and equals $\mu$.

Assume now that the given stochastic equation has a unique weak solution and $P_{\delta_x}$
is the distribution of the solution with initial condition $\delta_x$.
We observe that due to our assumption about the uniqueness of  solutions the
mapping $\sigma\mapsto P_{\sigma}$ is Borel measurable when the  spaces of measures are equipped with their weak topologies
(or with metrics generating them). Indeed, even without any assumptions about uniqueness, the set of all pairs $(P,\sigma)$
of probability measures for which $P$ is a  measure  on the space $C([0,+\infty),\mathbb{R}^d)$ of continuous paths
and $\sigma$ is a measure on $\mathbb{R}^d$ such that $\sigma$ is the image $P$ under the mapping $x\mapsto x(0)$ and the process
$$
f(x(t))-f(x(0))-\int_0^t Lf(x(s))\, ds
$$
is a martingale with respect to the measure $P$ for all functions $f\in C_b^2(\mathbb{R}^d)$, is a Borel set in the product
$\mathcal{P}(C([0,+\infty),\mathbb{R}^d))\times \mathcal{P}(\mathbb{R}^d)$. This is seen from the fact that this set
can be described by a countable number of equalities of the form
$$
\int \psi_i\, dP=0
$$
with some countable collection of bounded Borel functions $\psi_i$ on the path space along with the equality
of the measure $\sigma$ to the image of $P$ with respect to the operator $x\mapsto x(0)$. A~countable collection $\{\psi_i\}$
arises because  for verification of the martingale property we can use a countable collection of functions~$f$,
moreover, the martingale property itself can be verified only for rational times, and the comparison of the
corresponding conditional expectations also employs a countable collection of functions.

According to \cite[Theorem 5.3, Ch.~IV]{IkeW} measures $P_{\delta_x}$ form a Markov family and $T_tf$ is a semigroup
the space of bounded continuous functions. Note that for every cylindrical set $C$ one has
$$
P_{\mu}(C)=\int_{\mathbb{R}^d}P_{\delta_x}(C)\, \mu(dx),
$$
which implies the equality
$$
\int_{C([0, +\infty), \mathbb{R}^d)}f(\xi_t)\, P_{\mu}(d\xi)=
\int_{\mathbb{R}^d}\int_{C([0, +\infty), \mathbb{R}^d)}f(\xi_t)\, P_{x}(d\xi)\, \mu(dx)
$$
for every bounded continuous function $f$. It remains to observe that the left side of the last equality
is the integral of $f$ against the measure $\mu$ and the right side is the integral of $T_tf$ against the measure $\mu$.
\end{proof}

In the general case the stationary  Kolmogorov equation  can have
several different probability solutions (see \cite[Chapter 4]{book}). Sufficient conditions for uniqueness of probability
solutions have been  obtained in \cite{BSH-DAN21} in the case where the matrix $A$ satisfies the classical  Dini  condition, but by
Theorem \ref{th12} and the results in \cite{DongEscKim} their justification extends without changes to the case
of the matrix $A$ satisfying the Dini mean oscillation condition. Thus, the following assertion is true.

\begin{theorem}\label{th7}
Let $a^{ij}$ and $b^i$ be defined on all of $\mathbb{R}^d$,
$b^i\in L^{d+}_{loc}$, and for every ball $B$ one can find
a number $\nu_B>0$ and a continuous nonnegative increasing function $w_B$ on $[0, 1]$ such
that $w_B(0)=0$, the integral $\displaystyle\int_0^{1}\frac{w_B(t)}{t}\,dt$ converges, and
$$
\nu_B\cdot I\le A(x) \le \nu_B^{-1}\cdot I, \quad
\sup_{x\in B}\frac{1}{|B(x, r)|}\int_{B(x, r)} |a^{ij}(y)-a^{ij}_B(x, r)|\,dy\le w_B(r), \quad r\in(0, 1].
$$
 Suppose that $\varrho$ is a probability solution to equation {\rm(\ref{eq1})} such that at least one of following conditions is fulfilled:

 {\rm (i)}  $(1+|x|)^{-2}a^{ij}, (1+|x|)^{-1}b^i\in L^1(\varrho\,dx)$,

 {\rm (ii)} there exists $V\in C^2(\mathbb{R}^d)$ with $\lim\limits_{|x|\to\infty}V(x)=+\infty$ and $LV\le C_1+C_2V$.

Then $\varrho$ is a unique probability solution.
\end{theorem}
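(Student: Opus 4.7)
The plan is to verify the assertion in the paragraph preceding the theorem: that the uniqueness proof of \cite{BSH-DAN21}, designed for the classical Dini condition on $A$, transfers intact to the Dini mean oscillation setting. My strategy is to audit that proof and replace each invocation of the classical Dini condition by the corresponding result available here under Dini mean oscillation, noting that the structure of the argument is unchanged.

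First, I would isolate the regularity inputs on which the proof of \cite{BSH-DAN21} relies: (a) every nonnegative probability solution has a continuous strictly positive version; (b) the Harnack inequality holds on every ball, with a constant depending only on local data (the ellipticity bound, the mean-oscillation modulus $w_B$, the exponent $p$, and the local $L^p$-norm of $b$); and (c) a uniform modulus-of-continuity estimate for solutions on a smaller ball with the same kind of quantitative dependence. In \cite{BSH-DAN21} these inputs were taken from \cite{Sjogan75} and from results in \cite{BSH17} under the classical Dini condition. In the present setting they are supplied, with exactly the same quantitative dependencies, by Theorem \ref{th12}, whose proof proceeds via Zvonkin's transform and the results of \cite{DongKim} and \cite{DongEscKim}.

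Next, I would check that hypotheses (i) and (ii) enter the argument in a manner insensitive to the precise modulus of $A$. Hypothesis (i) is used only in the justification of passage to the limit in the defining integral identity applied to cutoffs $\varphi_R(x)=\zeta(|x|/R)$: the second-order terms $a^{ij}\partial_{x_i}\partial_{x_j}\varphi_R$ are of order $R^{-2}$ on the annulus $R\le|x|\le 2R$ and the first-order terms $b^i\partial_{x_i}\varphi_R$ are of order $R^{-1}$, so the bounds $(1+|x|)^{-2}a^{ij},(1+|x|)^{-1}b^i\in L^1(\varrho\,dx)$ are exactly what is needed to make the tails vanish. Hypothesis (ii) reduces to (i) via the standard Lyapunov-type moment estimate of \cite[Theorem 2.3.2]{book}, which gives $V\in L^1(\varrho\,dx)$ and hence the required decay of $a^{ij}$ and $b^i$ against $\varrho$ at infinity.

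Finally, the comparison of two probability solutions in \cite{BSH-DAN21} invokes auxiliary solutions of the associated parabolic Kolmogorov equation along with a maximum principle in that parabolic setting. The parabolic Harnack inequality and regularity results under Dini mean oscillation are precisely what is supplied by \cite{DongKim} and \cite{DongEscKim}. The only real bookkeeping step, and the point I expect to be the main obstacle, is confirming that the quantitative constants in these parabolic estimates depend only on the same data as in the classical Dini case (ellipticity, $w_B$, $p$, and $\|b\|_{L^p}$ on an enlarged ball), with no hidden dependence on a stronger modulus of $A$ or on the solution itself. Once this is verified, every step of \cite{BSH-DAN21} applies verbatim and yields the stated uniqueness conclusion under either hypothesis (i) or (ii).
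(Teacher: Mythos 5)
Your proposal matches the paper's own treatment: the paper also proves this theorem by observing that the argument of \cite{BSH-DAN21} transfers verbatim once the classical-Dini regularity inputs (continuity, strict positivity via Harnack, and quantitative modulus-of-continuity bounds) are replaced by Theorem~\ref{th12} for the Dini mean oscillation case. One caution, though, on a speculative detail of your write-up: you state as fact that \cite{BSH-DAN21} hinges on parabolic Kolmogorov equations with a parabolic maximum principle, and that \cite{DongKim}, \cite{DongEscKim} supply the parabolic Harnack inequality under Dini mean oscillation --- but both cited references are elliptic regularity papers, and the present paper itself invokes only Theorem~\ref{th12} and \cite{DongEscKim} (elliptic) in justifying the extension, so before you rely on a parabolic version of these results you would need either to verify that \cite{BSH-DAN21} in fact stays elliptic, or to point to a genuinely parabolic Dini-mean-oscillation reference.
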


Applying  Zvonkin's transform, we obtain the following sufficient  condition for uniqueness that agrees with Corollary \ref{corzvon}.

\begin{corollary}
Suppose that the coefficients $a^{ij}$ and $b^i$ are defined on all of $\mathbb{R}^d$ and we can find a
 number $\nu>0$ and an increasing continuous function $\omega$ on $[0, +\infty)$ such that
$$
\nu\cdot I\le A(x)\le\nu^{-1}\cdot I, \quad
\sup_{x\in\mathbb{R}^d}\frac{1}{|B(x, r)|}\int_{B(x, r)}|a^{ij}(y)-a^{ij}_B(x, r)|\,dy\le\omega(r),
$$
 $\omega(0)=0$ and the integral $\displaystyle\int_0^1\frac{\omega(r)}{r}\,dr$ converges.

Let also $b^i=b_1^i+b_2^i$, where $b_2\in L^p(\mathbb{R}^d)$ with $p>d$ and
$$
\lim\limits_{|x|\to\infty}\langle b_1(x), x\rangle\le C_1-C_2|x|^{\kappa+1}, \quad |b_1(x)|\le C_3(1+|x|^{\kappa}),
$$
for some $C_1, C_2, C_3>0$ and $\kappa>0$. Then there exists a unique probability
solution $\varrho$ to equation {\rm (\ref{eq1})} on $\mathbb{R}^d$.
\end{corollary}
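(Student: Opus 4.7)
The plan is to obtain existence directly from Corollary \ref{corzvon} (a global Dini mean oscillation bound automatically implies the $VMO$ bound used there) and to deduce uniqueness by combining Zvonkin's transform with Theorem \ref{th7}.

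For uniqueness, I would first apply Proposition \ref{prop1} to $\beta = b_2$ on $\mathbb{R}^d$, obtaining a map $u \in C^1(\mathbb{R}^d) \cap W^{2,p}(\mathbb{R}^d)$ with $\|\nabla u\|_{\infty} < \delta$, and form the global diffeomorphism $\Phi(x) = x + u(x)$ as in Proposition \ref{prop2}. Running the argument of Proposition \ref{prop3} with the full drift $b = b_1 + b_2$ (and using the defining identity $\mathrm{tr}(AD^2 u^k) + \langle b_2, \nabla u^k\rangle - \lambda u^k = -b_2^k$) shows that $\varrho$ is a probability solution of~(\ref{eq1}) if and only if $\sigma(y) = \varrho(\Psi(y))|\det\Psi'(y)|$ is a probability solution on $\mathbb{R}^d$ of $\mathcal{L}^{*}\sigma = 0$, where
\begin{gather*}
q^{km}(y) = a^{ij}(\Psi(y))\partial_{x_i}\Phi^k(\Psi(y))\partial_{x_j}\Phi^m(\Psi(y)), \\
h^k(y) = \lambda u^k(\Psi(y)) + b_1^i(\Psi(y))\partial_{x_i}\Phi^k(\Psi(y)).
\end{gather*}

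Next I would verify the hypotheses of Theorem \ref{th7} for this transformed equation. Uniform ellipticity of $Q = (q^{km})$ follows from $\nu I \le A \le \nu^{-1}I$ together with $\|u'\| \le 1/2$. The Dini mean oscillation condition for $Q$ on every ball is obtained by applying Lemma \ref{Dinisave}(i) entrywise to $a^{ij}$ (noting that $\partial_{x_i}\Phi^k$ is H\"older continuous of order $1 - d/p$ since $u \in W^{2,p}$ with $p > d$) and then invoking the product rule \cite[Lemma 2.1]{DongEscKim} for multiplying a Dini mean oscillation function by a H\"older one. Because $b_1$ is locally bounded and $u \in C^1$, we have $h \in L^{d+}_{loc}$. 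To verify condition~(ii) of Theorem \ref{th7} I would take $V(y) = |y|^2/2$ and observe that $\mathcal{L} V(y) = \mathrm{tr}\,Q(y) + \langle h(y), y\rangle$: the first term is bounded by $d/\nu$, while the second, after choosing $\delta C_3 < C_2$, satisfies $\langle h(y), y\rangle \to -\infty$ as $|y|\to\infty$ by the computation in the proof of Corollary \ref{corzvon}. In particular $\mathcal{L}V \le C_1 + C_2 V$ globally.

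Theorem \ref{th7} then yields uniqueness of the probability solution $\sigma$ of the transformed equation, and since $\varrho \mapsto \sigma$ is a bijection on probability densities (induced by the global diffeomorphism $\Phi$ via change of variables preserving total mass), uniqueness of $\varrho$ transfers back to the original equation. The main technical obstacle I anticipate is the careful verification of the Lyapunov bound on all of $\mathbb{R}^d$: the estimate carried out in Corollary \ref{corzvon} only controls $\langle h(y), y\rangle$ for large $|y|$, so I would need to combine it with the local $L^{\infty}$ bounds on $u$ and on $b_1|_{B_R}$ and with the global ellipticity of $Q$ in order to upgrade it to a clean global inequality $\mathcal{L}V \le C_1 + C_2 V$ valid everywhere.
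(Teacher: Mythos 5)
Your proposal is correct and follows essentially the same route as the paper: existence is quoted from Corollary \ref{corzvon}, and uniqueness is obtained by running the very Zvonkin change of coordinates used there (with $\beta=b_2$) and then checking condition~(ii) of Theorem~\ref{th7} for the transformed equation, with the Dini mean oscillation property of $Q$ coming from Lemma~\ref{Dinisave} combined with the Dini/H\"older product rule. Your closing worry about globalizing the Lyapunov inequality is unnecessary: since $u$ is bounded, $\nabla u$ is bounded, and $|b_1(x)|\le C_3(1+|x|^{\kappa})$ holds for all $x$, the quantity $\langle h(y),y\rangle$ is locally bounded and tends to $-\infty$, so $\mathcal{L}V=\mathrm{tr}\,Q+\langle h(y),y\rangle$ is bounded above on all of $\mathbb{R}^d$ and condition~(ii) holds trivially.
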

\begin{proof}
The existence is proved in Corollary \ref{corzvon}.
After the change of coordinates as in the proof of Corollary \ref{corzvon} the coefficients of the new equation
satisfy condition (ii) in Theorem \ref{th7}, hence a probability solution is unique.
\end{proof}

We do not know whether the  continuity of the matrix $A$ is sufficient for the uniqueness of a probability solution.

This research is supported by the CRC 1283 at Bielefeld
University, the Russian Foundation for Basic Research Grant 20-01-00432,
Moscow Center of Fundamental and Applied Mathematics, and 
the Simons-IUM fellowship. S.V.~Shaposhnikov is
a winner of the contest ``Young Mathematics of Russia'', and thanks its jury and sponsors.

\end{document}